\journal{Journal of Computational Physics}
\newcommand{\norm}[1]{\left\Vert#1\right\Vert}
\newcommand{\normE}[1]{\left\Vert\hskip -0.8pt \left\vert #1 \right\vert\hskip -0.8pt\right\Vert_{h,h}}
\newcommand{\pd}[1]{\left\langle #1\right\rangle}
\newcommand{\set}[1]{\left\{#1\right\}}
\newcommand{\M}{\mathcal{M}}
\newcommand{\C}{\mathcal{C}}
\newcommand{\CC}{(\mathcal{C}_{h,H}|_{V_{h,H}})^{-1}\mathcal{C}_{h,H}}
\newcommand{\nn}{\nonumber}
\newcommand{\ls}{\lesssim}
\newcommand{\al}{\alpha}
\newcommand{\ga}{\gamma}
\newcommand{\Ga}{\Gamma}
\newcommand{\na}{\nabla}
\newcommand{\om}{\omega}
\newcommand{\Om}{\Omega}
\newcommand{\pa}{\partial}
\newcommand{\dx}{\,\mathrm{d}x}
\newcommand{\eq}[1]{\begin{align}#1\end{align}}
\newcommand{\eqn}[1]{\begin{align*}#1\end{align*}}
\definecolor{gold}{rgb}{0,0,1}
\newtheorem{theorem}{Theorem}
\newtheorem{lemma}{Lemma}
\newtheorem{proposition}{Proposition}
\newtheorem{definition}{Definition}
\newtheorem{remark}{Remark}
\newtheorem{example}{Example}
\numberwithin{equation}{section} \numberwithin{theorem}{section}
\numberwithin{corollary}{section} \numberwithin{lemma}{section}
\begin{document}
\begin{frontmatter}

\title{A combined multiscale finite element method based on the LOD technique for the multiscale elliptic problems with singularities}
% Group authors per affiliation:
\author[nju]{Kuokuo Zhang\fnref{myfootnote}}
\ead{dg1721016@smail.nju.edu.cn}
%\fntext[myfootnote]{The work of this author was
%partially supported by the University Postgraduate Research and Innovation Project of Jiangsu Province 2014 under Grant KYZZ\_0021.}

\author[nju]{Weibing Deng\corref{mycorrespondingauthor}\fnref{myfootnote1}}
\ead{wbdeng@nju.edu.cn}
\cortext[mycorrespondingauthor]{Corresponding author}
\fntext[myfootnote1]{The work of this author was
partially supported by the NSF of China grant 12090023 and 12171237, and by the
Ministry of Science and Technology of China grant 2020YFA0713800.}

\author[nju]{Haijun Wu\fnref{myfootnote2}}
\ead{hjw@nju.edu.cn}
\fntext[myfootnote2]{The work of this author was
partially supported by the NSF of China grants 11071116, 91130004.}

\address[nju]{Department of Mathematics,
Nanjing University, Nanjing 210093, People's Republic of China}

\begin{abstract}
In this paper, we construct a combined multiscale finite element method (MsFEM) using the Local Orthogonal Decomposition (LOD) technique to solve the multiscale problems which may have singularities in some special portions of the computational domain. For example, in the simulation of steady flow transporting through highly heterogeneous porous media driven by extraction wells, the singularities lie in the near-well regions. The basic idea of the combined method is to utilize the traditional finite element method (FEM) directly on a fine mesh of the problematic part of the domain and using the LOD-based MsFEM on a coarse mesh of the other part. The key point is how to define local correctors for the basis functions of the elements near the coarse and fine mesh interface, which require meticulous treatment. The proposed method takes advantages of the traditional FEM and the LOD-based MsFEM, which uses much less DOFs than the standard FEM and may be more accurate than the LOD-based MsFEM for problems with singularities. The error analysis is carried out for highly varying coefficients, without any assumptions on scale separation or periodicity. {Numerical examples with periodic and random highly oscillating coefficients}, as well as the multiscale problems on the L-shaped domain, and multiscale problems with high-contrast channels or well-singularities are presented to demonstrate the efficiency and accuracy of the proposed method.
\end{abstract}

\begin{keyword}
Multiscale problems\sep non-periodic \sep LOD \sep well-singularity \sep high-contrast channel.

\MSC[2021] 34E13\sep  65N12 \sep 65N30
\end{keyword}
%\begin{AMS}
%34E13, %Multiple scale methods
%35B27, %Homogenization; partial differential equations in media with periodic structure
%65N12, %Stability and convergence of numerical methods
%65N15, %Error bounds
%65N30 %Finite elements, Rayleigh-Ritz and Galerkin methods, finite methods
%\end{AMS}

\end{frontmatter}

\linenumbers
\section{Introduction}
In this paper we consider the elliptic problems with rapidly varying (non-periodic) coefficients, which involve many spatial scales. Such problems are typically referred to as multiscale problems and often arisen in composite materials and flows in porous media. Any meaningful numerical simulation of these problems such as standard finite element method (FEM) has to account for the highly heterogeneous fine-scale structures in the whole computational domain. This means that the underlying computational mesh has to be sufficiently fine and hence requires an enormous computational demand.

In order to overcome this difficulty, many kinds of methods have been developed in recent decades to solve such multiscale problems. Roughly speaking, from the perspective of final approximation solution, these numerical methods can be categorized into two classes. One is to solve the original problem in the constructed coarse-grid multiscale basis function space hence obtains a good approximation of the original-problem solution; the other is to solve a macro model equivalent to the original problem on the coarse grid mesh hence grasps the macro behavior of the multiscale solution. See, for example, the generalized FEM (GFEM) \cite{BCO1994,MR2249154,MR1734667}, the multiscale FEM (MsFEM) \cite{MsFEM1997,MsFEM2000,CH2002}, the variational multiscale methods (VMM) or residual-free bubbles method (RFBM) \cite{VMM1995,VMM1998,BFHR,FY}, the heterogeneous multiscale methods (HMM) \cite{HMM2003,HMM2005,MR2916381}, the multiscale finite-volume method \cite{JLT2003}, the multigrid numerical homogenization techniques \cite{FB2, MR3605827}, the mortar multiscale methods \cite{MotMsFEM2002, MotMsFEM2007}, the localized orthogonal decomposition methods (LODM) \cite{MLocalization,HenningOversampling,HenningLocalized}, the equation-free approaches \cite{MR2341798,MR2306402}, the generalized MsFEM (GMsFEM) \cite{GMsFEM2013JCP, MR3506953}, the multiscale-spectral GFEM (MS-GEFM) \cite{MR4076862, MR2801210}, the constraint energy minimizing GMsFEM (CEM-GMsFEM)\cite{2018GMsFEM,MR3905735,MR3992043}, some numerical homogenization methods or upscaling methods \cite{OZh2007,OZh2011,up1991,up2000,up2002}, and so on.

Most of the above mentioned multiscale methods consist of two parts, one is the macro solver on coarse mesh such as various finite element or finite volume methods, and the other is the cell problems solving on the coarse grid or oversampling elements. The multiscale algorithm captures the fine-scale information of the solution by solving the cell problems, and then uses the solutions of the cell problems to form an equivalent macro model or a low dimensional multiscale approximation space of the solution. The definition of the cell problem is mainly based on the differential operator of the original multiscale problem, such as the elliptic operator, so that the variability of the multiscale coefficients can be brought into the final solution model through the solution of the cell problems.

In this paper, we are concerned with a special kind of multiscale problems -- those with singularities. For example, the one in L-region has singularity near the corner; while the problem with high-contrast channel that connects the boundaries of coarse-grid blocks has singularity at the edge of the channel \cite{EGW2011,GE20101,GE20102,OZh2011}; furthermore, the problem with steady flow transporting through highly heterogeneous porous media driven by extraction wells has singularity near the well \cite{CY2002}.
The traditional multiscale methods on coarse grids may be inefficient when dealing with singularity. This is mainly because the local singularity of the solution is hard to be grasped effectively at the coarse grid level. To solve this kind of singular problems, some numerical methods have been proposed in the literature. See, for instance,
the adaptive GMsFEM used to solve the high contrast problem \cite{Chung:Efendiev:Li:2014,MR3840879}, the MsFEM used to solve the high contrast interface and channel problems \cite{CGH2010,EGW2011}, the complete multiscale coarse grid algorithm by using the Green functions for solving steady flow problem involving well singularities in heterogeneous porous medium \cite{CY2002}, the CEM-GMsFEM used to solve the high contrast problem \cite{2018GMsFEM,MR3905735}, the LODM used to solve high contrast and complex geometric boundary problems \cite{HenningLocalized, MR3655818, MR3707891}, the combined MsFEM used to solve high contrast channel and well--singularity problems \cite{DW2014,DW2016}, and some generalized finite element methods and numerical homogenization methods used to solve high contrast problems \cite{MR3605827,MR4076862, MR2801210,OZh2007,OZh2011}, and so on. Among them, most of the multiscale methods capture the small scale information of the original-problem solution through the solution of the cell problems. Moreover, for the problems with singularities, such as the problem with high-contrast and narrow channels, in order to grasp the singularities, it needs to construct multiscale finite element approximation space via solving special cell problems. For example, the CEM-GMsFEM first needs to construct the auxiliary multiscale functions by solving the local spectral problem. Consequently, the auxiliary function space is constructed by selecting the eigenfunctions corresponding to small eigenvalues, which correspond to high contrast channels. Finally, the online multiscale basis functions are constructed based on constrained energy minimization in the auxiliary function space.

However, it is difficult to define the corresponding subproblems to construct the required approximation space for the problems with source term singularity, such as the porous medium flow problem with well singularities. In \cite{DW2014,DW2016}, the authors combined the standard FEM with the oversampling MsFEM and Petrov-Galerkin MsFEM to solve the multiscale problem with singularity.  The standard FEM is used on a fine mesh of the problematic part of the domain and the oversampling MsFEM or Petrov-Galerkin MsFEM is used on a coarse mesh of the other part. The transmission condition on the interface between coarse and fine meshes is dealt with the penalty technique.
The proposed methods take the advantages of the standard FEM and the MsFEM, and maintain the accuracy of the two methods. It is shown \cite{DW2014,DW2016} that the combined multiscale methods can solve the multiscale elliptic problems with fine and long-ranged high contrast channels and the well singularities very efficiently. But, the error analysis of the methods is still based on the classical homogenization theory, which requires the assumption that the diffusion coefficient is periodic. Therefore, how to improve the algorithm so that the optimal error estimate can be obtained for any diffusion coefficient needs further study. We remark that in the past decade, there are many nice multiscale methods dealing with arbitrary oscillating coefficients, such as the LODM, CEM-GMsFEM, MS-GFEM, and some numerical homogenization methods mentioned above \cite{MR3605827, MLocalization, HenningOversampling, HenningLocalized, MR4076862, MR2801210, 2018GMsFEM,MR3905735,MR3992043, OZh2007,OZh2011}.

In this paper, we focus on the LODM which was originally introduced in \cite{MLocalization} and could be derived from the VMM framework \cite{HenningOversampling,HenningLocalized}. The orthogonal decomposition method starts from two finite element spaces, a coarse space $V_H$ and a very high dimensional space $V_h$ which can approximate the multiscale solution well. Further, the decomposition can be described in three steps: (1) define a quasi-interpolation operator $I_H : V_h \rightarrow V_H$, (2) define a high dimensional space of negligible information
by the kernel of the operator $I_H$, i.e. $W_h$:= kern($I_H$), and (3) find the orthogonal complement of $W_h$
in $V_h$ with respect to the energy scalar product. With this strategy, it is possible to split the
space $V_h$ into the orthogonal direct sum of a low dimensional multiscale space $V_{H}^{ms}$
and a high dimensional remainder space $W_h$. The multiscale problem is solved in the
low dimensional space $V_{H}^{ms}$ and is therefore cheap. However, the construction of the
exact splitting of $V_h$ = $V_{H}^{ms}\oplus W_h$ is unpractical since it needs to define the correction operator in the whole domain which is computationally expensive. We call the method as an ideal one whose solution is referred as ideal solution.
To reduce the computational complexity, several localization strategies were proposed and analyzed in  \cite{MLocalization,HenningOversampling,HenningLocalized}. In fact, the computation of the orthogonal decomposition is localized to the patches of the elements, which we introduced as LODM. The reason which makes localization successful is that outside of the support of the coarse finite element basis functions of $V_H$, the canonical basis functions of the multiscale space $V_{H}^{ms}$ have the property of exponential decay. We remark that the LODM often use the fine-scale solution in $V_h$ for comparison, which is referred to as reference solution.

The essence of the LODM is to construct a low-dimensional solution space (with a locally supported basis functions) that has very accurate approximation properties with respect to the exact solution. So far, the idea of LOD has been generalized to several kinds of discretization techniques such as discontinuous Galerkin \cite{ElfversonConvergence}, Petrov-Galerkin formulations \cite{petrovlod} and mesh-free methods \cite{MR3587383}. Moreover, the method has been successfully applied to many kinds of problems such as semi-linear elliptic
problems \cite{HenningA}, eigenvalue problems \cite{MComputation, HenningTwo}, problems on complicated geometries \cite{MR3655818}, and so on. We refer the reader to \cite{MR3926249, MR3616023} and references therein for more works about LODM. The attractive point of this method is that it does not rely on the classical homogenization theory and does not need the scale separation assumption.

Based on the above observation, we will use the LOD technique to improve the combined MsFEM and make it suitable for general multiscale problems.
Note that the traditional FEM has many excellences to deal with the singularities, such as, refining the mesh or enlarging the polynomial order of the finite element space. Thus, in order to take advantages of both methods, we introduce a combined FE and LOD method (FE--LODM) to solve the multiscale problems with singularities. The idea of this approach is to utilize the traditional FEM directly on a fine mesh of the problematic part of the domain and use the LODM on a coarse mesh of the other part.
{Comparing to the implement of LODM, there are two key issues of the FE--LODM to consider. The first one is how to define the corresponding quasi-interpolation operator in the subdomain using fine mesh. Here we just choose the $L^2$ projection $\Pi_h$, which has the property that $\Pi_h u_h=u_h$ for $u_h$ belongs to the fine mesh linear FEM space. This property is very important in our later error analysis, which yields a very useful result that the ideal solution is equals to the reference solution in the subdomain using fine mesh.
The second one is how to define the correction operators near the interface between the coarse and fine mesh. A delicate treatment should be done for the elements who have an edge or face in the interface of coarse--fine mesh.
} For the introduced FE-LODM, we carry out a rigorous and careful analysis for the elliptic equation with arbitrary  diffusion coefficient to show both the energy and $L^{2}$ errors of the method have the {optimal} convergence rate.
The numerical results also show that the proposed FE-LODM is very efficient for multiscale problems with random generated coefficients and singularities.

The rest of this paper is organized as follows. In Section 2, we give the
model problem and define a fine-scale reference problem. Section 3 is devoted
to deriving the FE-LODM. In Section 4, we present the error analysis of the approach. In Section 5, we provide some numerical results to demonstrate the efficiency of our method.  Conclusions are draw in the last section.

Throughout this paper, standard notations for Lebesgue and Sobolev spaces are employed, and $C$ denotes the generic constant, which depends on neither the mesh size nor the diffusion coefficient. We also use the shorthand notation $a\lesssim b$ and $b\gtrsim a$ for the inequality $a \leq Cb$ and $b\geq Ca$.
In addition, the shorthand notation $a \eqsim b$ represents that $a\lesssim b$ and $b\lesssim a$.

\section{{Model problem and reference approximations}}
In this section, we first present the {multiscale} model problem, {then introduce its interior penalty continuous-discontinuous Galerkin (IPCDG) discretization on fine meshes and discuss the approximation errors of the IPCDG method. The IPCDG solution will be used as a fine-scale reference solution to estimate the error of the FE-LODM. Note that the IPCDG method was first introduced in \cite{cao_wu} for the Helmholtz equation.}

\subsection{Model Problem}
In this paper, we consider a second order elliptic problem with highly varying diffusion coefficient. Let $\Omega\subset\mathbb{R}^d$, $d$=2,3, be
a {polygonal/polyhedral}  domain, and the elliptic equation reads as
\begin{equation}\label{model}
 \left\{ \begin{aligned}
 &-\nabla\cdot(A\nabla u)=f\quad \mathrm{in}\,\,\Omega,\\
 & \qquad\qquad\quad u=0 \quad \mathrm{on}\,\,\partial\Omega,
\end{aligned} \right.
 \end{equation}
 where we assume that $f\in L^2(\Omega)$, and the diffusion
 matrix $A\in L^\infty(\Omega,{\mathbb{R}^{d\times d}})$
 is a symmetric {matrix} with uniform spectral
 bounds $\beta\geq\alpha>0$, {i.e.}
 \begin{equation}\begin{aligned}
&\sigma(A(x))\subset[\alpha,\beta]\quad \forall\, x\,\in \,\Omega.
\end{aligned}\end{equation}

The weak formulation of problem \eqref{model} is to find $u\in H^1_0(\Omega)$
such that
\begin{equation}\label{vaspro}
\begin{aligned}
&\int_{\Omega} A\nabla u\cdot\nabla v \dx=\int_{\Omega} fv\dx
\quad\forall \, v\in \, H^1_0(\Omega).
\end{aligned}
\end{equation}
Clearly, the Lax-Milgram lemma \cite{SuSc} implies that \eqref{vaspro} has a unique solution.

In order to deal with the multiscale problem that has singularities, we decompose the research domain $\Omega$ into two parts, $\Omega_{1}$ and $\Omega_{2}$, {where $\Omega_{1}$ consists of some subdomain(s) containing the singularities and $\Omega_{2}=\Omega \backslash \overline{\Omega}_{1}$} (see Figure~\ref{jpg1} for an illustration). {Let} $\Gamma\,=\partial\Omega_{1}\cap\partial\Omega_{2}$ {be} the interface {between} $\Omega_{1}$ and $\Omega_{2}$.
We assume that the length/area of $\Gamma$ satisfies $|\Gamma|=O(1)$, and $\Gamma$ is Lipschitz continuous.
\begin{figure}[htp]
  \begin{minipage}[t]{0.5\linewidth}
  \centerline{\includegraphics[scale=0.56]{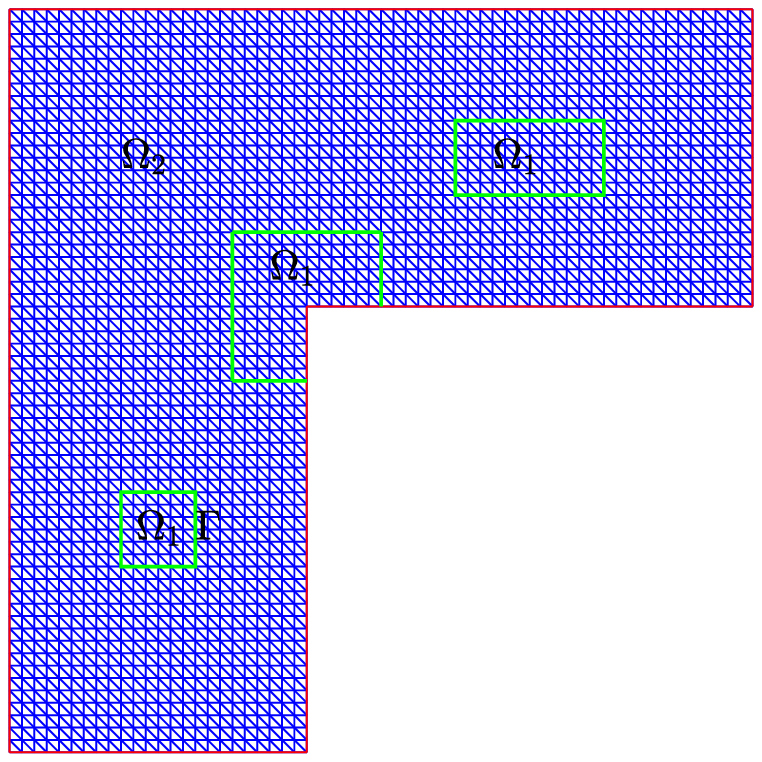}}
  \end{minipage}
%  \fill
  \begin{minipage}[t]{0.5\linewidth}
  \centerline{\includegraphics[scale=0.56]{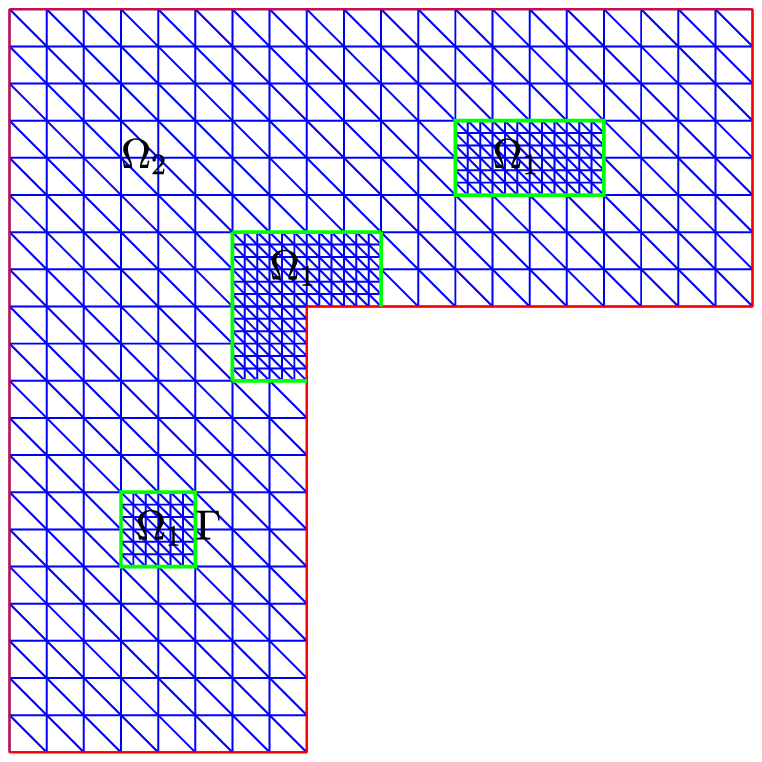}}
  \end{minipage}
\caption{{A decomposition of $\Om$ into $\Om_1$ with singularities and $\Om_2=\Om\backslash\Om_1$, where the green lines represent the interface $\Ga$. Left: A fine-scale mesh for the reference problem; Right: A mesh for the combined multiscale methods. } }
\label{jpg1}
\end{figure}

For any subdomain $\omega\subseteq\Omega$, we denote by  $(u,v)_{\om}=\int_{\om} uv$. For any segment/patch $\ga\subseteq \Ga$, denote by $\pd{u,v}_\ga$ $= \int_\ga uv$. For brevity, let $(u,v)=(u,v)_{\Om}$, $(\nabla u,\nabla v)=(\nabla u,\nabla v)_{\Om}$ and $\pd{u,v}=\pd{u,v}_\Ga$.

\subsection{Reference problem}
In this {subsection, we introduce the IPCDG method which will be used as} the fine-scale reference problem to estimate the error of our FE-LODM.

Let $\mathcal{M}_{h,\Omega_{1}}$ and $\mathcal{M}_{h,\Omega_{2}}$ be regular and quasi-uniform triangulations of $\Omega_{1}$ and $\Omega_{2}$, respectively.
Denote by $\mathcal{M}_{h,h}:=\mathcal{M}_{h,\Omega_{1}}\cup\mathcal{M}_{h,\Omega_{2}}$ the resulted triangulation of $\Om$. Note that any element in the triangulation is considered closed by convention. For any $T\in\mathcal{M}_{h,h}$, let $h_T:={\rm diam}\, T$. Denote by $h:=\max_{T\in\mathcal{M}_{h,h}}h_T$. Let $\mathbf{n}$ be the unit normal vector of $\Ga$ that points from $\Om_1$ to $\Om_2$. We
 define the jump and average of a function $v$ across $\Ga$ by $[v]:=v|_{\Om_1}-v|_{\Om_2}$ and $\{v\}:=(v|_{\Om_1}+v|_{\Om_2})/2$, respectively. Moreover, we denote by $\na_h$ the piecewise gradient on $\Om_1\cup\Om_2$, that is, $\na_hv|_{\Om_i}=\na (v|_{\Om_i}), i=1,2$.

Denote by $\Ga_i=\pa\Om\cap\pa\Om_i$ and
\eqn{H_{\Ga_i}^1(\Om_i):=\set{v\in H^1(\Om_i):\;v|_{\Ga_i}=0 \text{ in the sense of trace}}, i=1,2.}
 Let $V_{h,\Om_i}$ be the continuous linear Lagrange finite element space on $\M_{h,\Om_i}, i=1,2$,  respectively,  i.e.
 \eqn{
 V_{h,\Om_i}:=\set{v_h\in H_{\Ga_i}^1(\Om_i):\; v_h|_T\in P_1,\;\forall\, T\in\M_{h,\Om_i}},}
 where $P_1$ is the set of polynomials with total degree $\le 1$. Then the approximation space of the fine-scale reference problem is defined by
 \eq{\label{Vhh}
 V_{h,h}:=\set{v_h:\; v_h|_{\Om_i}\in V_{h,\Om_i},\;i=1,2}.}
Note that a discrete function in $V_{h,h}$ is continuous on each $\Om_i, i=1,2$, but may be discontinuous across the interface $\Ga$.
Given some positive penalty parameters $\gamma_{0}>0$, for any subset $\om\subseteq\Omega$, we define a symmetric bilinear form $a_{\om}(\cdot,\cdot)$ as follows:
\begin{align}
a_{\om}(u,v) := &(A\nabla_h u,\nabla_h v)_\om - \big(\pd{\{A\nabla_h u\cdot \mathbf{n}\},[v]}_{\Ga\cap\om}+\pd{[u],\{A\nabla_h v\cdot \mathbf{n}\}}_{\Ga\cap\om}\big)\notag\\
              &+J_{\om}(u,v),\label{aom}\\
 J_{\om}(u,v):= & \pd{\frac{\gamma_{0}}{h}[u],[v]}_{\Ga\cap\om}.\label{Jom}
\end{align}
Then the IPCDG method (cf. \cite{cao_wu}) reads as: find $u_{h,h}\in V_{h,h}$, such that
\begin{equation}\label{solution1}
a_{\Omega}(u_{h,h},v_{h,h})=(f,v_{h,h})\quad \forall\, v_{h,h} \in V_{h,h}.
\end{equation}
\begin{remark}
{\rm (1)} Noticed that $u_{h,h}$ is continuous in $\Om_2$ and subdomain(s) in $\Om_1$ and that the discontinuities across the interface $\Ga$ is treated by the interior penalty technique from the IPDG methods \cite{arnold82}, so we call this method \eqref{solution1} the IPCDG method.

{\rm (2)} It is easy to verify that the IPCDG method is consistent with the multiscale problem \eqref{model}, that is
\eqn{a_\Om(u-u_{h,h},v_{h,h})=0\quad \forall\, v_{h,h} \in V_{h,h}.}
\end{remark}

Introduce the discrete energy norm
\eqn{\normE{v}=\Big(\|A^{\frac{1}{2}}\nabla_h v\|^2_{0,\Omega}+\frac{\gamma_0}{h}\|[v]\|^2_{\Ga}+\frac{h}{\gamma_0}\|\{A\nabla_h v\cdot \mathbf{n}\}\|_\Ga^2\Big)^\frac12.
}
Clearly, the bilinear form $a_\Om$ is continuous on $V\times V$ where $V:=\{v:\; v|_{\Om_i}\in H^2(\Om_i)\cap H^1_{\Ga_i}(\Om_i), i=1,2\}$, i.e.
\eq{\label{continuous}
|a_\Om(u,v)|\ls\normE{u}\normE{v}.}
Following \cite{cao_wu,arnold82}, it may be proved that there exists a positive constant $\al_0$ such that
\eq{\label{coercive}
a_\Om(v_{h,h},v_{h,h})\gtrsim \normE{v_{h,h}} \quad v_{h,h} \in V_{h,h},\quad\text{if }\ga_0\ge\al_0,}
and hence the following C\'ea's lemma and the well-posedness hold for the IPCDG method \eqref{solution1} if the penalty parameter $\ga_0\ge\al_0$. We omitted the details.
\begin{lemma}\label{Lipcdg} Suppose $\ga_0\ge\al_0$. Then the following error estimate holds:
\eqn{
\normE{u-u_{h,h}}\ls \inf_{v_{h,h}\in V_{h,h}}\normE{u-v_{h,h}}.}
\end{lemma}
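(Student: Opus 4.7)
The plan is to derive the quasi-optimality estimate by combining the three standard ingredients already prepared in the excerpt: Galerkin orthogonality (the consistency statement in the Remark), coercivity \eqref{coercive}, and continuity \eqref{continuous}. First I would fix an arbitrary $v_{h,h}\in V_{h,h}$ and split the error via the triangle inequality as
\eqn{\normE{u-u_{h,h}}\le \normE{u-v_{h,h}}+\normE{v_{h,h}-u_{h,h}},}
so that it suffices to bound the discrete part $\normE{v_{h,h}-u_{h,h}}$ by $\normE{u-v_{h,h}}$.

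Next, since $v_{h,h}-u_{h,h}\in V_{h,h}$ and $\ga_0\ge\al_0$, I would apply the coercivity estimate \eqref{coercive} to get
\eqn{\normE{v_{h,h}-u_{h,h}}^2\ls a_\Om(v_{h,h}-u_{h,h},v_{h,h}-u_{h,h}).}
Writing $v_{h,h}-u_{h,h}=(v_{h,h}-u)+(u-u_{h,h})$ and invoking the consistency identity $a_\Om(u-u_{h,h},w_{h,h})=0$ for $w_{h,h}=v_{h,h}-u_{h,h}\in V_{h,h}$ from the Remark, the right-hand side reduces to $a_\Om(v_{h,h}-u,v_{h,h}-u_{h,h})$. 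Then the continuity bound \eqref{continuous} yields
\eqn{\normE{v_{h,h}-u_{h,h}}^2\ls \normE{u-v_{h,h}}\,\normE{v_{h,h}-u_{h,h}},}
and dividing through gives $\normE{v_{h,h}-u_{h,h}}\ls\normE{u-v_{h,h}}$. Combining with the triangle inequality and taking the infimum over $v_{h,h}\in V_{h,h}$ produces the claim.

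The only delicate point is that the continuity bound \eqref{continuous} is stated on $V\times V$ with $V$ requiring piecewise $H^2$-regularity, so one must check that all arguments plugged into $a_\Om$ live in $V$. For $v_{h,h}-u_{h,h}\in V_{h,h}\subset V$ this is immediate since piecewise linears are trivially $H^2$ on each subdomain. For $u-v_{h,h}$ one uses the standard interior/boundary elliptic regularity that the weak solution of \eqref{model} has $u|_{\Om_i}\in H^2(\Om_i)$ (at least piecewise, under the stated assumptions on the polygonal/polyhedral geometry and the singularity decomposition), so the normal traces $\{A\nabla_h u\cdot\mathbf{n}\}$ are well-defined on $\Ga$ and the weighted jump/flux terms in $\normE{\cdot}$ are finite. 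Once this mild regularity hypothesis is in place, the proof is the classical Céa-type argument, so as noted by the authors the details can be omitted.
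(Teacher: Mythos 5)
Your proposal is the classical C\'ea-type argument (triangle inequality, coercivity \eqref{coercive} on the discrete error, Galerkin orthogonality from the consistency remark, continuity \eqref{continuous}, divide and take the infimum), which is precisely the proof the paper has in mind when it states the lemma and omits the details with reference to the IPDG literature. The only caveat is your appeal to piecewise $H^2$ elliptic regularity of $u$, which is not actually available for merely $L^\infty$ multiscale coefficients; but the statement of the lemma (and the consistency identity) already presupposes that $u$ lies in $V$, i.e.\ that $\normE{u-v_{h,h}}$ and the flux trace $\{A\nabla_h u\cdot\mathbf{n}\}$ on $\Ga$ are well defined, so treating this as a standing regularity assumption rather than a consequence of elliptic theory leaves your argument intact and in line with the paper.
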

Then the error estimate of the IPCDG method may be obtained by combining the above C\'ea's lemma and the interpolation error estimates. We omitted the details and just assume that
 the IPCDG solution $u_{h,h}$ is a good approximation of the exact solution $u$. We will use $u_{h,h}$ as a reference solution to estimate the error of our FE-LODM.

For further error analysis, we introduce the following norm on the restriction of the space $V_{h,h}$ onto a subdomain $\om\subseteq\Om$:
\begin{equation}\label{norm1}
\|v\|_{h,h,\om}=\Big(\|A^{\frac{1}{2}}\nabla_h v\|^2_{0,\om}+\frac{\gamma_0}{h}\|[v]\|^2_{\Ga\cap\om}\Big)^\frac12
\end{equation}
and denote by $\|\cdot\|_{h,h}:=\|\cdot\|_{h,h,\Om}$
the norm on the whole domain $\Om$. Noting that the norm $\|\cdot\|_{h,h}$ is just the norm $\normE{\cdot}$ with the third term dropped, by using the trace and inverse inequalities \cite{SuSc}, it is easy to show that the two norms are equivalent on the fine-scale approximation space $V_{h,h}$.

\section{FE-LODM formulation}
In this section, we will present the FE-LODM which uses FEM in the domain $\Om_1$ containing singularities and LODM in $\Om_2$ where the solution is smooth but highly oscillating and the two methods are joint at the interface $\Ga$ by using the interior penalty technique. To do this, we first introduce coarse meshes on $\Om_2$ and coarse-scale finite element spaces, secondly state the multiscale decomposition of the fine-scale space $V_{h,h}$ and  the approximation space for the FE-LODM, then present the ideal combined multiscale method, and finally formulate the localized combined multiscale method, i.e., FE-LODM.

{\subsection{Coarse-scale FE spaces}

Let $\mathcal{M}_{H,\Omega_{2}}$ a shape-regular coarse triangulation of  $\Omega_{2}$ such that  the fine reference mesh $\mathcal{M}_{h,\Omega_{2}}$ is a refinement of it. Denote by} $H$  the maximum diameter of elements in $\mathcal{M}_{H,\Omega_{2}}$. Clearly, $h<H$. Let $\mathcal{M}_{\Gamma_{h}}$ and $\mathcal{M}_{\Gamma_{H}}$ be the set of interface elements in  $\mathcal{M}_{h,\Omega_{1}}$ and $\mathcal{M}_{H,\Omega_{2}}$, respectively, i.e.
\eqn{\mathcal{M}_{\Gamma_{h}}:=\set{T\in \mathcal{M}_{h,\Omega_{1}}:\; |T\cap\Ga|\neq 0}\;\text{and}\;\mathcal{M}_{\Gamma_{H}}:=\set{T\in \mathcal{M}_{H,\Omega_{2}}:\; |T\cap\Ga|\neq 0}.
}
 Denote by $\Gamma_{h}:=\set{T\cap\Ga:\;T\in \mathcal{M}_{\Gamma_{h}}}$ and $\Gamma_{H}:=\set{T\cap\Ga:\;T\in \mathcal{M}_{\Gamma_{H}}}$ the two partitions of the interface $\Gamma$ induced by $\mathcal{M}_{h,\Omega_{1}}$ and $\mathcal{M}_{H,\Omega_{2}}$, respectively.
In addition, we assume that $\mathcal{M}_{h,\Omega_{1}}$ and $\mathcal{M}_{H,\Omega_{2}}$
satisfy the matching condition that $\Gamma_{h}$ is a refinement of $\Gamma_{H}$. Introduce the coarse-scale finite element space on the coarse mesh $\mathcal{M}_{H,\Om_2}$:
\eqn{V_{H,\Omega_{2}}:=\set{v_H\in H_{\Ga_2}^1(\Om_2):\; v_H|_T\in P_1,\;\forall\, T\in\M_{H,\Om_2}}.}
Let
 \eqn{%\label{VhH}
 V_{h,H}:=\set{v_{h,H}:\; v_{h,H}|_{\Om_1}\in V_{h,\Om_1},\;v_{h,H}|_{\Om_2}\in V_{H,\Om_2}}.}
 Moreover, we denote by
 \eqn{V_{0,h}:= \{v_{h,h}\in V_{h,h} :\; v_{h,h}|_{\Omega_{1}}=0\} \;\text{and}\;V_{0,H}:= \{v_{h,H}\in V_{h,H} :\; v_{h,H}|_{\Omega_{1}}=0\}.}

\subsection{Multiscale Decomposition}
First, we need to define a quasi-interpolation operator from the fine-scale approximation space to the coarse-scale space. For this, we first introduce a weighted Cl\'{e}ment-type quasi-interpolation operator $\mathcal{C}_{H}$  defined on the region $\Omega_{2}$ (see \cite{clement,clementt}). Let $\mathcal{N}_{H}$ be the set of vertices of elements in $\mathcal{M}_{H,\Omega_{2}}$ and let  $\mathring{\mathcal{N}}_{H}:= \mathcal{N}_{H}\backslash \Ga_2$. For any node $z\in\mathcal{N}_{H}$, let  $\Phi_{z}$ $\in$ $V_{H,\Omega_{2}}$ be the nodal basis function at $z$.
The Cl\'{e}ment-type quasi-interpolation operator $\mathcal{C}_{H}$: $H_{\Gamma_{2}}^{1}(\Omega_{2})$ $\mapsto$ $V_{H,\Omega_{2}}$ is given by:
\begin{equation}\label{defclement}
 \mathcal{C}_{H}u:=\sum\limits_{{z} \in \mathring{\mathcal{N}}_{H}} u_{z}\Phi_{z} \quad
with\,\, u_{z}=\frac{(u,\Phi_{z})_{\Omega_{2}}}{(1,\Phi_{z})_{\Omega_{2}}}\qquad  \forall\, u\,\in H_{\Gamma_{2}}^{1}(\Omega_{2}).
\end{equation}
Further, let $\Pi_{h}: L^{2}(\Omega_{1}) \mapsto V_{h,\Omega_{1}}$ be the $L^{2}$-projection operator. Clearly,
\begin{equation}\label{kk77}
\Pi_{h}v_{h}=v_{h}\qquad  \forall \,v_{h}\,\in V_{h,\Omega_{1}}.
\end{equation}
Then the quasi-interpolation operator $\mathcal{C}_{h,H}$:$V_{h,h}$ $\mapsto$ $V_{h,H}$ can be defined as

\begin{equation}\label{ChH}
 \begin{cases} \mathcal{C}_{h,H}v_{h,h}|_{\Omega_{1}}:=\Pi_{h}(v_{h,h}|_{\Om_1}); \\
 \mathcal{C}_{h,H}v_{h,h}|_{\Omega_{2}}:=\mathcal{C}_{H}(v_{h,h}|_{\Om_2}),
\end{cases}\quad \text{for~any~} v_{h,h}\in V_{h,h}.\end{equation}

By the operator $\mathcal{C}_{h,H}$, we can define its kernel space $W_{h,h}:=\{v_{h,h}\in
V_{h,h}\mid\mathcal{C}_{h,H}v_{h,h}=0 \}$, and use it to construct a splitting of
the space $V_{h,h}$ into the direct sum
\begin{equation}
 V_{h,h}=V_{h,H}\oplus W_{h,h}.
\end{equation}
Notice that, for any $w_{h,h}$ $\in$ $W_{h,h}$, from \eqref{kk77} it follows that $w_{h,h}|_{\Omega_{1}}=0$. Hence, in the following, we change the notation $W_{h,h}$ into $W_{0,h}$ for emphasis.

Note that the subspace $W_{0,h}$ is a fine-scale remainder space ({high-dimensional} space), which contains the fine-scale features of $V_{0,h}$ that can not be expressed in the low-dimensional space $V_{0,H}$.
Following the idea of LODM (\cite{HenningLocalized,HenningA}), we look for a splitting $V_{h,h}=V^{ms}_{h,H}\oplus W_{0,h}$ such that
the space $V^{ms}_{h,H}$ has good $H^{1}$ approximation properties to the solution of the multiscale problem.
It is obvious that $V^{ms}_{h,H}$ is a {low-dimensional} space that it has the same dimension as $V_{h,H}$.
In order to explicitly construct such a splitting, we look for the orthogonal complement of $W_{0,h}$ in $V_{h,h}$ with respect to the scalar product $a_{\Omega}{(\cdot,\cdot)}$.

The corresponding fine-scale projection $Q_{h}$: $V_{h,h} \mapsto W_{0,h}$ is given by: for $v_{h,h}$ $\in$ $V_{h,h}$, find ${Q_{h}v_{h,h}}$ $\in$ $W_{0,h}$, such that
\begin{equation}\label{corg}
a_{\Omega}({Q_{h}v_{h,h}},w_{0,h})=a_{\Omega}(v_{h,h},w_{0,h}) \quad
\forall\, w_{0,h}\in W_{0,h}.
\end{equation}
Using the fine-scale projection, we can define {the approximation space on the whole domain $\Om$ for the ideal combined multiscale method}  by
\begin{align}\label{VmshH}
V^{ms}_{h,H}: =({I}-Q_{h})V_{h,H}.
\end{align}

\subsection{The ideal combined multiscale method}
Next, we define the ideal combined multiscale method  for the problem \eqref{solution1} as follows: for all $v^{ms}_{h,H}\in V^{ms}_{h,H}$, find $u^{ms}_{h,H}\in
V^{ms}_{h,H}$, such that
\begin{equation}\label{solutionms}
a_{\Omega}(u^{ms}_{h,H},v^{ms}_{h,H})=(f,v^{ms}_{h,H}).
\end{equation}

With above definition of the ideal method, we can see that, by taking $v^{ms}_{h,H}=u^{ms}_{h,H}$ in \eqref{solutionms} and using the coercivity \eqref{coercive} of $a_\Om$ on $V_{h,h}$, we have the following stability for the ideal solution $u^{ms}_{h,H}$: if $\ga_0\ge\al_0$,
 \eq{\label{stability}
 \|u^{ms}_{h,H}\|_{h,h}\ls \|f\|_{L^2(\Om)}.}

\begin{remark}\label{re13} It can be proved that $\mathcal{C}_{h,H}$ is an isomorphism on $V_{h,H}$ (see Lemma~\ref{inverse}). Thus we can split
 $u^{ms}_{h,H}$  into
\[
u^{ms}_{h,H}=\underbrace{(\mathcal{C}_{h,H}|_{V_{h,H}})^{-1}\mathcal{C}_{h,H}u_{h,H}^{ms}}_{\in V_{h,H}}-\underbrace{\left((\mathcal{C}_{h,H}|_{V_{h,H}})^{-1}\mathcal{C}_{h,H}u_{h,H}^{ms}-u^{ms}_{h,H}\right)}_{\in W_{0,h}}.
\]
Moreover, it is easy to check that
$$
(\mathcal{C}_{h,H}|_{V_{h,H}})^{-1}\mathcal{C}_{h,H}u_{h,H}^{ms}-u^{ms}_{h,H}=Q_h((\mathcal{C}_{h,H}|_{V_{h,H}})^{-1}\mathcal{C}_{h,H}u_{h,H}^{ms}).
$$
Therefore, we have the splitting that obeys \eqref{VmshH}
  \eq{\label{DP}u^{ms}_{h,H}=u_{h,H}-Q_h(u_{h,H}),\quad\text{where } u_{h,H}:= (\mathcal{C}_{h,H}|_{V_{h,H}})^{-1}\mathcal{C}_{h,H}u_{h,H}^{ms}.}  It is clear that $\mathcal{C}_{h,H}u^{ms}_{h,H}=\mathcal{C}_{h,H}u_{h,H}$.
\end{remark}

Before closing this subsection, we present an interesting result in the following proposition about the ideal method \eqref{solutionms} which says that there is no difference between the ideal solution and the reference solution in the subdomain $\Om_1$.
\begin{proposition}\label{prop1}
Let $u_{h,h}$ and $u_{h,H}^{ms}$ be the solutions to \eqref{solution1} and \eqref{solutionms}, respectively. Then we have
\begin{equation}
u_{h,h}-u_{h,H}^{ms}\in W_{0,h}\quad\text{and}\quad \mathcal{C}_{h,H}u_{h,h}=\mathcal{C}_{h,H}u_{h,H},
\end{equation}
where $u_{h,H}$ is defined in Remark~\ref{re13}. Especially, $u_{h,h}|_{\Om_1}=u_{h,H}^{ms}|_{\Om_1}$.
\end{proposition}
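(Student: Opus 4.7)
The plan is to exploit the direct-sum decomposition $V_{h,h}=V^{ms}_{h,H}\oplus W_{0,h}$ together with the $a_\Om$-orthogonality built into $V^{ms}_{h,H}=(I-Q_h)V_{h,H}$ by construction. Since $u_{h,h}\in V_{h,h}$, I would first write it uniquely as $u_{h,h}=v^{ms}+w$ with $v^{ms}\in V^{ms}_{h,H}$ and $w\in W_{0,h}$, and then test the IPCDG equation \eqref{solution1} against an arbitrary $\phi\in V^{ms}_{h,H}\subset V_{h,h}$ to obtain $a_\Om(v^{ms},\phi)+a_\Om(w,\phi)=(f,\phi)$.

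By the defining relation \eqref{corg} of $Q_h$, every element of $(I-Q_h)V_{h,H}$ is $a_\Om$-orthogonal to $W_{0,h}$, so $a_\Om(w,\phi)=0$ and $v^{ms}$ satisfies the ideal Galerkin problem \eqref{solutionms}. Coercivity \eqref{coercive} (inherited on the subspace $V^{ms}_{h,H}\subset V_{h,h}$) gives uniqueness for \eqref{solutionms}, so $v^{ms}=u^{ms}_{h,H}$ and hence $u_{h,h}-u^{ms}_{h,H}=w\in W_{0,h}$, which is the first assertion. Applying $\mathcal{C}_{h,H}$ to this identity and using that $W_{0,h}=\ker \mathcal{C}_{h,H}$ yields $\mathcal{C}_{h,H}u_{h,h}=\mathcal{C}_{h,H}u^{ms}_{h,H}$, and Remark~\ref{re13} already records $\mathcal{C}_{h,H}u^{ms}_{h,H}=\mathcal{C}_{h,H}u_{h,H}$, giving the second assertion. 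Finally, the observation preceding the definition of $V^{ms}_{h,H}$ — which combined \eqref{kk77} with \eqref{ChH} to conclude that any $w_{h,h}\in W_{0,h}$ vanishes on $\Om_1$ — applied to $u_{h,h}-u^{ms}_{h,H}$ immediately yields $u_{h,h}|_{\Om_1}=u^{ms}_{h,H}|_{\Om_1}$.

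I do not foresee a real obstacle here: this is a clean Galerkin orthogonality argument on the LOD splitting, and each ingredient (the direct sum, the $a_\Om$-orthogonality of $(I-Q_h)V_{h,H}$ to $W_{0,h}$, the kernel property of $\mathcal{C}_{h,H}$, and the vanishing on $\Om_1$ of functions in $W_{0,h}$) has been established in the preceding subsections. The only item deserving explicit verification is that $V^{ms}_{h,H}\subset V_{h,h}$ so that it is a legitimate test space for \eqref{solution1}, which is immediate from \eqref{VmshH} since $V_{h,H}\subset V_{h,h}$ and $Q_h$ maps into $W_{0,h}\subset V_{h,h}$.
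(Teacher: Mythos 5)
Your argument is correct and is essentially the paper's own proof reorganized: the paper subtracts the two variational equations to get Galerkin orthogonality $a_\Omega(u_{h,h}-u^{ms}_{h,H},v^{ms}_{h,H})=0$ and concludes membership in $W_{0,h}$ from the $a_\Omega$-orthogonal splitting, then applies $\mathcal{C}_{h,H}$ and the vanishing of $W_{0,h}$-functions on $\Omega_1$, exactly the ingredients you use. Decomposing $u_{h,h}$ first and identifying its $V^{ms}_{h,H}$-component with the ideal solution by uniqueness is just the same orthogonality argument read in the other direction, so no substantive difference and no gap.
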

\begin{proof}
Form \eqref{solution1} and (\ref{solutionms}), for all $v_{h,H}^{ms}\in V_{h,H}^{ms}$, it follows that
\begin{align*}
&a_{\Omega}(u_{h,h},v_{h,H}^{ms})=(f,v_{h,H}^{ms}),\\
&a_{\Omega}(u_{h,H}^{ms},v_{h,H}^{ms})=(f,v_{h,H}^{ms}).
\end{align*}
Thus
\begin{equation}\label{glaerkinorth}
a_{\Omega}(u_{h,h}-u_{h,H}^{ms},v_{h,H}^{ms})=0,
\end{equation}
which means $u_{h,h}-u_{h,H}^{ms}\in W_{0,h}$. Hence
\[
\mathcal{C}_{h,H}u_{h,h}=\mathcal{C}_{h,H}u_{h,H}^{ms}=\mathcal{C}_{h,H}u_{h,H},
\]
which yields the results immediately.
\end{proof}

\subsection{Formulation of FE-LODM}\label{sec34}

Note that the fine-scale projection $Q_{h}$ in \eqref{corg} is defined globally onto $W_{0,h}$, and consequently, in order to calculating the basis functions of the discrete space $V^{ms}_{h,H}$, one has to solve many large equations with ${\rm dim}(W_{0,h})$ unknowns. Therefore, for practical application, we have to localize the definition of $Q_h$ to obtain an approximation of the ideal combined multiscale method, i.e., FE-LODM. To do so, we first decompose $Q_h$ by restrict \eqref{corg} to each elements in $\mathcal{M}_{h,H}$ and then localize the restrictions.

For each $T\in \mathcal{M}_{H,\Omega_{2}}$ we associate it with a point set $\tilde T\supseteq T$ defined as follows. If $T\in \mathcal{M}_{H,\Omega_{2}}\backslash\mathcal{M}_{\Gamma_H}$, we just let $\tilde T =T$. While, for $T\in \mathcal{M}_{\Gamma_H}$,  we let $$\tilde T:=T\cup\set{t\in \mathcal{M}_{\Gamma_h}:\; (t\cap\Ga)\subset(T\cap\Ga)}$$ be the union of $T$ and those interface elements in $\mathcal{M}_{\Gamma_h}$ whose intersections with the interface $\Ga$ are contained in $\pa T$ (see Figure~\ref{elepatch} for an illustration).
\begin{figure}[htp]
\centerline{\includegraphics[width=0.56\textwidth]{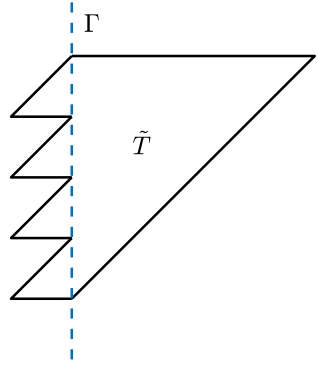}}
\caption{An illustration of the interface  combined element $\tilde{T}$ for $T\in \mathcal{M}_{\Ga_H}$. }
\label{elepatch}
\end{figure}

Then we define the restrictions of $Q_h$ for any $T\in \mathcal{M}_{H,\Omega_{2}}$ as:
   ${Q_h^T}v_{h,h}$ $\in$ $W_{0,h}$ such that
\begin{equation}\label{ahti}
a_{\Omega}(Q_h^Tv_{h,h},w_{0,h})=a_{\tilde T}(v_{h,h},w_{0,h}) \quad
\forall\, w_{0,h}\in W_{0,h}.
\end{equation}
\begin{remark}\label{re11}
For $v_{h,h}\in V_{h,h}, w_{0,h}\in W_{0,h}$, $a_\Om(v_{h,h}, w_{0,h})$ has nonzero terms on the interface $\Ga_h$. We integrate them into the definition of ${Q_h^T}v_{h,h}$ for the elements $T\in\mathcal{M}_{\Gamma_H}$ by restricting the bilinear form $a$ onto $\tilde T$.
\end{remark}

Noting that any function in $W_{0,h}$ vanishes in $\Om_1$, from \eqref{aom} we have
\eqn{a_{\Om_1\backslash (\cup_{T\in\M_{\Ga_h}}T)}(v_{h,h},w_{0,h})=0\quad\forall\, v_{h,h}\in V_{h,h}, w_{0,h}\in W_{0,h}.}
Therefore it follows from \eqref{corg} that
\eqn{a_{\Omega}(Q_{h}v_{h,h},w_{0,h})=\sum_{T\in\mathcal{M}_{H,\Om_2}}a_{\tilde T}(v_{h,h},w_{0,h})\quad
\forall\, w_{0,h}\in W_{0,h},}
and hence we have the following decomposition:
\eq{\label{QhD} Q_h=\sum_{T\in\mathcal{M}_{H,\Om_2}}Q_h^T.}

Although the definitions of $Q_h^Tv_{h,H}$ are independent of each other and can be computed in parallel, they are still global and have to be solved on the whole fine-scale space $W_{0,h}$.

Next we introduce local versions of the correction operators $Q_h^T$  and give error estimates between them. To this end, we need the definitions of element patches (c.f. \cite{HenningLocalized}).
\begin{definition}[element patches]
 {Given $T\in \mathcal{M}_{H,\Omega_{2}}$, the patches $T_L$ are defined recursively as follows:}
\begin{align*}
&T_0:=T,\\
&T_L:=\set{T'\in \mathcal{M}_{H,\Omega_{2}}|\,\,T'\cap {T_{L-1}}\neq \emptyset}\quad L=1,2,\cdots.
\end{align*}
\end{definition}
The restriction of the fine-scale correction space $W_{0,h}$ to the element patch $T_L$ is defined by
\eqn{W_{0,h}(T_L) :=\{v_{h,h}\in
W_{0,h}:\,\,v_{h,h}=0\,\,\text{ in }\,\,\Omega\backslash T_L\}.}
The localized approximation of the correction operator $Q_h^T$ is defined as follows.
\begin{definition}
For ${T}\in \mathcal{M}_{H,\Omega_{2}}$ and {the patch $T_L$,} the local correction operator $Q_h^{T,L}$: $V_{h,h}$ $\mapsto$ $W_{0,h}(T_L)$ is defined as follows: given $v_{h,h}$ $\in$ $V_{h,h}$, find $Q_h^{T,L}v_{h,h}$ $\in$ $W_{0,h}(T_L)$  such that
\begin{equation}\label{corl}
a_{\Om}(Q_h^{T,L}v_{h,h},w_{0,h})=a_{\tilde T}(v_{h,h},w_{0,h}) \quad
\forall\, w_{0,h}\in W_{0,h}(T_L).
\end{equation}
\end{definition}
According to {the decomposition \eqref{QhD} and} the above definition, the global corrector of level $L$ is given by
\begin{equation}\label{QhL}
{Q^{L}_h:=\sum\limits_{T\in \mathcal{M}_{H,\Omega_{2}}} Q_h^{T,L}.}
\end{equation}
Further, we define the {localized} multiscale approximation space as follows:
\begin{align}\label{VhmsL}
V^{ms,L}_{h,H}: =\big({I}-Q_{h}^{L}\big)V_{h,H} {=\set{v_{h,H}-Q^{L}_{h}v_{h,H}:\;v_{h,H}\in V_{h,H}}}.
\end{align}
Then, the FE-LODM {reads as: find}
$u^{ms,L}_{h,H}\in V^{ms,L}_{h,H}$, such that
\begin{equation}\label{solution2}
a_{\Omega}\big(u^{ms,L}_{h,H},v^{ms,L}_{h,H}\big)=\big(f,v^{ms,L}_{h,H}\big)\quad
\forall\, v^{ms,L}_{h,H}\in V^{ms,L}_{h,H}.
\end{equation}
\begin{remark}
{\rm (1)} {It is clear that $V^{ms,L}_{h,H}\subset V_{h,h}$ as a consequence of }the assumption that
 $\mathcal{M}_{h,\Omega_{2}}$ is a refinement of $\mathcal{M}_{H,\Omega_{2}}$. Therefore the FE-LODM inherits the well-posedness from the reference problem \eqref{solution1}.

{\rm (2)} {Unlike $V^{ms}_{h,H}$ whose multiscale basis functions supported globally on $\Om_2$, the multiscale basis functions of $V^{ms,L}_{h,H}$ locally support on small patches of size $O(LH)$,  and hence the computational cost for assembling the global system of the FE-LODM \eqref{solution2} is usually much less than that for the ideal combined method \eqref{solutionms}.
}
\end{remark}

\section{Error estimates for the FE-LODM}
In this section, we derive the $H^{1}$ and $L^{2}$ error estimates for the proposed FE-LODM.

We first recall two {local} trace inequalities which will be used in this paper frequently. Here we omitted the proof since it is a direct consequence of the standard trace inequality (cf. \cite[Theorem 1.6.6, p.39]{SuSc}) and the scaling argument (cf. \cite{PG}).
\begin{lemma}\label{lem2}
Let $T$ be an element {in the triangulation $\mathcal{M}_{h,\Omega_{1}}$, $\mathcal{M}_{h,\Omega_{2}}$, or $\mathcal{M}_{H,\Omega_{2}}$.} Then, we have
\begin{align*}
\|v\|_{0,\partial T}&\lesssim {\rm diam}(T)^{-\frac{1}{2}}\|v\|_{0,T}+\|v\|^{\frac{1}{2}}_{0,T}\|\nabla v\|^{\frac{1}{2}}_{0,T}\quad \forall\, v\,\in \, H^{1}(T),\\
\|v\|_{0,\partial T}&\lesssim  {\rm diam}(T)^{-\frac{1}{2}}\|v\|_{0,T} \quad \forall\, v\,\in\, P_{1}(T),
\end{align*}
{where the invisible constants depend only} on the regularity of the element $T$.
\end{lemma}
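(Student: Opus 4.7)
The plan is to reduce both estimates to their analogues on a fixed reference simplex $\hat T\subset\R^d$ via an affine change of variables, exploiting the shape-regularity (not quasi-uniformity) of the three triangulations $\M_{h,\Om_1}$, $\M_{h,\Om_2}$, $\M_{H,\Om_2}$.

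First, on $\hat T$ I would record two classical ingredients. The standard trace theorem \cite[Thm.~1.6.6]{SuSc} yields, for every $\hat v\in H^1(\hat T)$,
\begin{equation*}
\|\hat v\|_{0,\pa\hat T}\ls \|\hat v\|_{0,\hat T}+\|\hat v\|_{0,\hat T}^{1/2}\|\hat\nabla\hat v\|_{0,\hat T}^{1/2},
\end{equation*}
while the equivalence of norms on the finite-dimensional space $P_1(\hat T)$ gives $\|\hat v\|_{0,\pa\hat T}\ls\|\hat v\|_{0,\hat T}$ for every $\hat v\in P_1(\hat T)$. These are the two reference inequalities I want to pull back to $T$.

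Next, for any $T$ in $\M_{h,\Om_1}$, $\M_{h,\Om_2}$, or $\M_{H,\Om_2}$, let $F_T(\hat x)=B_T\hat x+b_T$ be an affine diffeomorphism $\hat T\to T$. Shape-regularity implies $|\det B_T|\eqsim h_T^d$, $\|B_T\|\eqsim h_T$ and $\|B_T^{-1}\|\eqsim h_T^{-1}$, with constants depending only on the regularity parameter. Setting $\hat v:=v\circ F_T$, the usual change-of-variable formulas give
\begin{equation*}
\|v\|_{0,T}\eqsim h_T^{d/2}\|\hat v\|_{0,\hat T},\quad \|\nabla v\|_{0,T}\eqsim h_T^{(d-2)/2}\|\hat\nabla\hat v\|_{0,\hat T},\quad \|v\|_{0,\pa T}\eqsim h_T^{(d-1)/2}\|\hat v\|_{0,\pa\hat T}.
\end{equation*}
Substituting these into the reference inequalities and multiplying through by $h_T^{(d-1)/2}$, the pure $L^2$ term scales as $h_T^{-1/2}\|v\|_{0,T}$ while the exponent $(d-1)/2-d/4-(d-2)/4=0$ on the cross term collapses, recovering exactly the first estimate. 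The same computation on $P_1(T)$, with the gradient term absent, yields the second (polynomial inverse-trace) inequality.

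The only real obstacle is bookkeeping: one must verify that the constants depend solely on the shape-regularity of $T$ and are insensitive to which of the three triangulations $T$ belongs to. This is automatic because the reference inequalities hold on $\hat T$ with absolute constants and all three mesh families are shape-regular with a common regularity parameter; in particular, no relation between $h$ and $H$, and no quasi-uniformity of the combined mesh, enters the argument. A detailed presentation of the scaling machinery can be found in \cite{PG}.
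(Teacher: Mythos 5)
Your proposal is correct and matches the paper's intended argument: the paper omits the proof precisely because it follows from the standard trace inequality of \cite[Theorem 1.6.6]{SuSc} combined with the affine scaling argument of \cite{PG}, which is exactly the reference-element pullback you carry out (with the $P_1$ case handled by finite-dimensional norm equivalence). Your exponent bookkeeping and the observation that only shape regularity of the individual element enters are both accurate.
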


We shall also make use of the following {norm on the restriction of the space $V_{h,H}$ onto a subdomain $\om\subseteq\Om$:
\begin{equation}\label{norm2}
\|v\|_{h,H,\om}=\Big(\|A^{\frac{1}{2}}\nabla_h v\|^2_{0,\om}+\frac{\gamma_0}{H}\|[v]\|^2_{\Ga\cap\om}\Big)^\frac12,
\end{equation}
and denote by $\|\cdot\|_{h,H}:=\|\cdot\|_{h,H,\Om}$. Note that the norm $\|\cdot\|_{h,H,\om}$ is almost the same as the previous one $\|\cdot\|_{h,h,\om}$ in \eqref{norm1} except replacing $\frac{\ga_0}{h}$ there by $\frac{\ga_0}{H}$.}

\subsection{Properties of the operator $\mathcal{C}_{h,H}$}
In this subsection, we state two lemmas on the quasi-interpolation operator $\mathcal{C}_{h,H}$ given in \eqref{ChH}.

First we recall  some  stability and error estimates for the operator $\mathcal{C}_{H}$, whose proof can be found in \cite{clement,clementt}.
\begin{lemma}\label{lemstab}
For any $T\in\mathcal{M}_{H,\Omega_{2}}$ and ${v}_{h}\in V_{h,\Omega_{2}}$, {there hold following estimates}
\begin{align}
\|\nabla_h \mathcal{C}_{H}v_{h}\|_{0,T}&\lesssim \|\nabla_h v_{h}\|_{0,\hat{T}},\label{k1}\\
\|v_{h}-\mathcal{C}_{H}v_{h}\|_{0,T}&+H\|\nabla_h(v_{h}-\mathcal{C}_{H}v_{h})\|_{0,T}\lesssim H\|\nabla_h v_{h}\|_{0,\hat{T}},\label{k2}
\end{align}
where $\hat{T}=\cup\{T^{'}\in\mathcal{M}_{H,\Omega_{2}}:\; T^{'}\cap{T}\neq\emptyset\}.$
\end{lemma}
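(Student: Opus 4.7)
The plan is to reduce \eqref{k1} and \eqref{k2} to the classical arguments for weighted Cl\'ement-type quasi-interpolants on shape-regular meshes. The central observation is that $\mathcal{C}_H v_h$ is defined element-wise: on a single $T\in\M_{H,\Om_2}$ one has $\mathcal{C}_H v_h|_T=\sum_{z\in\mathring{\mathcal{N}}_H\cap\bar T}u_z\Phi_z$, and the coefficients $u_z=(v_h,\Phi_z)_{\Om_2}/(1,\Phi_z)_{\Om_2}$ are $L^2$-weighted averages of $v_h$ over the supports $\om_z=\mathrm{supp}(\Phi_z)\subseteq\hat T$. By shape regularity, each $\om_z$ has measure $\eqsim H^d$ and satisfies $\|\Phi_z\|_{0,T}\eqsim H^{d/2}$, $\|\na\Phi_z\|_{0,T}\ls H^{d/2-1}$.

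First I would establish the local stability \eqref{k1}. A direct application of Cauchy--Schwarz in the definition of $u_z$ gives $|u_z|\ls H^{-d/2}\|v_h\|_{0,\om_z}$, which combined with the inverse estimate above yields only the crude bound $\|\na\mathcal{C}_H v_h\|_{0,T}\ls H^{-1}\|v_h\|_{0,\hat T}$. To upgrade this to a gradient estimate, I would exploit the key invariance: whenever every vertex of $T$ lies in $\mathring{\mathcal{N}}_H$, $\mathcal{C}_H$ reproduces constants, so $\mathcal{C}_H(v_h-c)=\mathcal{C}_H v_h - c$ for any constant $c$. Choosing $c$ to be the mean of $v_h$ on $\hat T$ and applying the Poincar\'e inequality on the macro-patch $\hat T$ delivers $\|v_h-c\|_{0,\hat T}\ls H\|\na v_h\|_{0,\hat T}$, hence \eqref{k1}. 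For elements $T$ with a vertex on $\Ga_2$, that vertex is excluded from the sum, and constants are no longer reproduced exactly; but since $v_h|_{\Ga_2}=0$, the Friedrichs inequality on $\hat T$ gives the same scaling $\|v_h\|_{0,\hat T}\ls H\|\na v_h\|_{0,\hat T}$ directly, and \eqref{k1} follows again.

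For the approximation estimate \eqref{k2}, I would proceed similarly. On interior patches the constant-reproduction property together with a Bramble--Hilbert argument (applied on a reference patch and then scaled to $\hat T$) produces $\|v_h-\mathcal{C}_H v_h\|_{0,T}\ls H\|\na v_h\|_{0,\hat T}$. The $H^1$-seminorm part is then immediate from the triangle inequality $\|\na(v_h-\mathcal{C}_H v_h)\|_{0,T}\le\|\na v_h\|_{0,T}+\|\na\mathcal{C}_H v_h\|_{0,T}$ and \eqref{k1}. For boundary-adjacent $T$, the Friedrichs-based argument from the previous step handles both the $L^2$- and $H^1$-error without invoking constant reproduction.

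The main obstacle I anticipate is the careful handling of elements whose patch $\hat T$ touches $\Ga_2$: there the nodes on $\Ga_2$ are excluded from $\mathring{\mathcal{N}}_H$, so the clean ``subtract the mean'' trick fails and must be replaced by an argument that exploits $v_h|_{\Ga_2}=0$ through the Friedrichs inequality. One must also track that the constants in all scaling, inverse, Poincar\'e and Friedrichs inequalities depend only on the shape regularity of $\mathcal{M}_{H,\Om_2}$, so that the bounds are uniform in $H$ and in the particular element $T$. Once these two ingredients are in place, the elementwise estimates \eqref{k1}--\eqref{k2} follow.
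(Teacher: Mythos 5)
The paper offers no proof of this lemma at all---it is quoted from the classical Cl\'ement-interpolation literature (\cite{clement,clementt})---and your argument is exactly the standard proof found there: crude nodal-average bounds $|u_z|\lesssim H^{-d/2}\|v_h\|_{0,\omega_z}$ combined with inverse estimates, constant reproduction plus a patchwise Poincar\'e inequality on interior elements, $L^2$-stability for the error bound, and a Friedrichs inequality on patches where nodes on $\Gamma_2$ are dropped, with all constants controlled by shape regularity. The only detail worth making explicit is that when a vertex of $T$ lies on $\Gamma_2$ the patch $\hat T$ actually contains a whole mesh face lying in $\Gamma_2$ (the triangulation resolves $\partial\Omega_2$), so the Friedrichs constant scales like $H$; vanishing of $v_h$ at an isolated boundary vertex alone would not give this.
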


Using Lemma~\ref{lemstab}, we {may prove the following stability result for} $\mathcal{C}_{h,H}$.
\begin{lemma}\label{lem}
For any ${v_{h,h}}$ $\in$ $V_{h,h}$, it holds that
\begin{equation}\label{c3}
\|\mathcal{C}_{h,H}v_{h,h}\|_{h,H,\om} \lesssim \|v_{h,h}\|_{h,H,\hat\om},
\end{equation}
where $\hat\om:=\bigcup\big(\{\hat T:\; T\cap\om\neq\emptyset, T\in\mathcal{M}_{H,\Omega_{2}}\}\cup\{T:\; T\cap\om\neq\emptyset, T\in\mathcal{M}_{h,\Omega_{1}}\}\big)$.
\end{lemma}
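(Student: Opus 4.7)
The plan is to bound $\|\mathcal{C}_{h,H}v_{h,h}\|_{h,H,\om}^2$ by splitting it into three pieces: the gradient contribution over $\Om_1\cap\om$, the gradient contribution over $\Om_2\cap\om$, and the $\frac{\ga_0}{H}$-weighted jump contribution on $\Ga\cap\om$, and then controlling each piece by $\|v_{h,h}\|_{h,H,\hat\om}^2$. Since $\mathcal{C}_{h,H}$ acts as $\Pi_h$ on $\Om_1$ and as $\mathcal{C}_H$ on $\Om_2$, the first piece is immediate: because $v_{h,h}|_{\Om_1}\in V_{h,\Om_1}$, the identity \eqref{kk77} yields $\mathcal{C}_{h,H}v_{h,h}|_{\Om_1}=v_{h,h}|_{\Om_1}$, so the gradient over $\Om_1\cap\om$ is exactly the same for both sides. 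For the $\Om_2$-piece I would sum the Cl\'{e}ment stability estimate \eqref{k1} of Lemma~\ref{lemstab} over the coarse elements $T\in\mathcal{M}_{H,\Om_2}$ meeting $\om$ and invoke the uniform spectral bounds of $A$ to obtain $\|A^{1/2}\nabla_h\mathcal{C}_{H}(v_{h,h}|_{\Om_2})\|_{0,\Om_2\cap\om}\ls \|A^{1/2}\nabla_h v_{h,h}\|_{0,\Om_2\cap\hat\om}$.

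The jump term is the main obstacle since the jump of $\mathcal{C}_{h,H}v_{h,h}$ acquires a Cl\'{e}ment-interpolation-error contribution in addition to the intrinsic jump of $v_{h,h}$. On $\Ga$ I would decompose
\eqn{[\mathcal{C}_{h,H}v_{h,h}]=v_{h,h}|_{\Om_1}-\mathcal{C}_H(v_{h,h}|_{\Om_2})=[v_{h,h}]+\bigl(v_{h,h}|_{\Om_2}-\mathcal{C}_H(v_{h,h}|_{\Om_2})\bigr).}
By the triangle inequality, the contribution coming from $[v_{h,h}]$ is already part of $\|v_{h,h}\|_{h,H,\om}^2$, so the only real work is to bound $\frac{\ga_0}{H}\|v_{h,h}|_{\Om_2}-\mathcal{C}_H(v_{h,h}|_{\Om_2})\|_{0,\Ga\cap\om}^2$. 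For each interface coarse element $T\in\mathcal{M}_{\Ga_H}$ meeting $\om$, I would apply the local trace inequality of Lemma~\ref{lem2} to $w:=v_{h,h}|_{\Om_2}-\mathcal{C}_H(v_{h,h}|_{\Om_2})$, and then invoke the Cl\'{e}ment error estimate \eqref{k2} of Lemma~\ref{lemstab}, which gives $\|w\|_{0,T}\ls H\|\nabla_h v_{h,h}\|_{0,\hat T}$ and $\|\nabla_h w\|_{0,T}\ls \|\nabla_h v_{h,h}\|_{0,\hat T}$. Combining the two estimates produces $\|w\|_{0,\Ga\cap T}\ls H^{1/2}\|\nabla_h v_{h,h}\|_{0,\hat T}$, whence $\tfrac{\ga_0}{H}\|w\|_{0,\Ga\cap T}^2\ls \ga_0\|\nabla_h v_{h,h}\|_{0,\hat T}^2\ls \|A^{1/2}\nabla_h v_{h,h}\|_{0,\hat T}^2$ after applying the lower spectral bound of $A$.

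Finally I would sum the three contributions over the relevant fine and coarse elements and note that, by construction, $\hat\om$ contains every enlarged patch $\hat T$ that appears above together with every fine element in $\Om_1$ touching $\om$; this yields the desired bound \eqref{c3}. The crux of the argument is therefore the trace-scaling estimate on $\Ga$: once the factor $H^{1/2}$ is produced by combining Lemma~\ref{lem2} with \eqref{k2}, the remaining steps reduce to standard summation together with the uniform spectral bounds of $A$.
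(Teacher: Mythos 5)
Your proposal is correct and follows essentially the same route as the paper's proof: the identity \eqref{kk77} on $\Om_1$, the Cl\'ement stability bound \eqref{k1} on $\Om_2$, and for the jump term the same decomposition $[\mathcal{C}_{h,H}v_{h,h}]=[v_{h,h}]+\bigl(v_{h,h}|_{\Om_2}-\mathcal{C}_H(v_{h,h}|_{\Om_2})\bigr)$ handled by the local trace inequality of Lemma~\ref{lem2} combined with \eqref{k2}. The only cosmetic difference is that you extract the factor $H^{1/2}$ explicitly, while the paper writes the equivalent bound $H^{-1}\|\cdot\|_{0,T}^2+H\|\nabla_h\cdot\|_{0,T}^2$ elementwise over $\mathcal{M}_{\Ga_H}$.
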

\begin{proof}
From the definitions \eqref{norm1} and \eqref{norm2} of the norms, \eqref{kk77}, \eqref{ChH}, and  \eqref{k1}, it is easy to see that
\begin{align}\label{k4lem}
\|\mathcal{C}_{h,H}v_{h,h}\|^{2}_{h,H,\om}
\lesssim {}& \sum\limits_{T \in \mathcal{M}_{H,\Omega_{2}}\cap\om}\|A^{\frac{1}{2}}\nabla_h v_{h,h}\|^{2}_{0,\hat{T}}\notag\\
&+\|A^{\frac{1}{2}}\nabla_h v_{h,h}\|^{2}_{0,\Omega_{1}\cap\om}+
\frac{\gamma_{0}}{H}\|[\mathcal{C}_{h,H}v_{h,h}]\|^{2}_{{\Ga\cap\om}}\nn  \\
\lesssim {}& {\|A^{\frac{1}{2}}\nabla_h v_{h,h}\|^{2}_{0,\Omega\cap\hat\om}+\frac{\gamma_{0}}{H}\|[\mathcal{C}_{h,H}v_{h,h}]\|^{2}_{\Ga\cap\om}}.
\end{align}
For the second term on the right hand side, using the triangle inequality, we have
\begin{align}
\frac{\gamma_{0}}{H}\|[\mathcal{C}_{h,H}v_{h,h}]\|^{2}_{\Ga\cap\om}\lesssim{}& {\frac{\gamma_{0}}{H}\|[\mathcal{C}_{h,H}v_{h,h}-v_{h,h}]\|^{2}_{\Ga\cap\om}+
\frac{\gamma_{0}}{H}\|[v_{h,h}]\|^{2}_{\Ga\cap\om}} \nn \\
\lesssim {}&\sum\limits_{\substack{E\in \Gamma_{H}\\E\cap\om\neq\emptyset}}\frac{\gamma_{0}}{H}\|\mathcal{C}_{H}{v_{h,\Om_2}}-v_{h,\Om_2}\|^{2}_{E}
+\frac{\gamma_{0}}{H}\|[v_{h,h}]\|^{2}_{\Ga\cap\om}\label{eq45}\\
:={}&\mathrm{\uppercase \expandafter {\romannumeral 1}}+\frac{\gamma_0}H\|[v_{h,h}]\|^{2}_{\Ga\cap\om}, \nn
\end{align}
where $v_{h,\Om_2}$: =$v_{h,h}|_{\Omega_{2}}$, and we have used the fact that $\mathcal{C}_{h,H}v_{h,h}|_{\Omega_{1}}=v_{h,h}|_{\Omega_{1}}$.

Further, from Lemma \ref{lem2} and \eqref{k2}, it follows that
\begin{align*}
\mathrm{\uppercase \expandafter {\romannumeral 1}} &\lesssim\sum\limits_{\substack{T\in \mathcal{M}_{\Gamma_{H}}\\ T\cap\om\neq\emptyset}}\frac{\gamma_{0}}{H}\left(H^{-1}
\|\mathcal{C}_{H}v_{h,\Om_2}-v_{h,\Om_2}\|^{2}_{0,T}
+H\|\nabla_h (\mathcal{C}_{H}v_{h,\Om_2}-v_{h,\Om_2})\|^{2}_{0,T}\right)\\
&\lesssim \sum_{\substack{T\in \mathcal{M}_{\Gamma_{H}}\\ T\cap\om\neq\emptyset}}\|\nabla_h v_{h,\Om_2}\|^{2}_{0,\hat{T}},
\end{align*}
which together with \eqref{k4lem} and \eqref{eq45} yields the result immediately.
\end{proof}

The following lemma gives a stability estimate of $\mathcal{C}_{h,H}|_{V_{h,H}}$, whose proof is arranged in \ref{a:inverse} for the convenience of the reader.
\begin{lemma}\label{inverse}$\mathcal{C}_{h,H}$ is an isomorphism on $V_{h,H}$ and satisfies the following estimate
\eqn{\big\|\big(\mathcal{C}_{h,H}|_{V_{h,H}}\big)^{-1}v_{h,H}\big\|_{h,H}\ls \norm{v_{h,H}}_{h,H}\quad \forall\, v_{h,H}\in V_{h,H}.}
\end{lemma}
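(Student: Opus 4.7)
The proof splits naturally into two parts: bijectivity of $\mathcal{C}_{h,H}|_{V_{h,H}}$ obtained from injectivity on a finite-dimensional space, and the uniform stability bound derived by reducing the problem to the $\Om_2$-component of $V_{h,H}$ and using the matrix representation of $\mathcal{C}_H$.

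For bijectivity, the key observation is that on $V_{h,H}$ the operator decouples across subdomains: by \eqref{kk77} and \eqref{ChH}, $\mathcal{C}_{h,H} v_{h,H}|_{\Omega_1} = v_{h,H}|_{\Omega_1}$ since $\Pi_h$ is the identity on $V_{h,\Omega_1}$, while $\mathcal{C}_{h,H} v_{h,H}|_{\Omega_2} = \mathcal{C}_H(v_{h,H}|_{\Omega_2})$. Thus it suffices to verify that $\mathcal{C}_H$ is injective on $V_{H,\Om_2}$. In the nodal basis $\{\Phi_z\}_{z\in\mathring{\mathcal{N}}_H}$ the operator $\mathcal{C}_H$ is represented by the matrix $D^{-1}M$, where $M$ is the $L^2$ mass matrix and $D=\mathrm{diag}\big((1,\Phi_z)_{\Omega_2}\big)$; since $M$ is symmetric positive definite and $D$ has strictly positive diagonal, $D^{-1}M$ is invertible. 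Injectivity on a finite-dimensional space implies bijectivity, giving the first claim.

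For the stability estimate, set $w_{h,H}:=(\mathcal{C}_{h,H}|_{V_{h,H}})^{-1}v_{h,H}$, $u_H:=v_{h,H}|_{\Om_2}$, $\tilde w_H:=w_{h,H}|_{\Om_2}$, and $z_H:=\tilde w_H-u_H\in V_{H,\Om_2}$; applying $\mathcal{C}_H$ to $\tilde w_H = u_H + z_H$ gives $\mathcal{C}_H z_H = u_H - \mathcal{C}_H u_H$. The decisive step is the uniform $L^2$-stability of $\mathcal{C}_H^{-1}$ on $V_{H,\Om_2}$: under the quasi-uniformity of $\M_{H,\Om_2}$, both $M$ and $D$ are spectrally equivalent to $H^d I$, hence $D^{-1/2}MD^{-1/2}\eqsim I$ with constants depending only on shape regularity, which yields $\|z_H\|_{0,\Om_2}\lesssim\|u_H-\mathcal{C}_H u_H\|_{0,\Om_2}$. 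Combined with \eqref{k2}, this gives $\|z_H\|_{0,\Om_2}\lesssim H\|\nabla u_H\|_{0,\Om_2}$, and an inverse inequality on the piecewise $P_1$ space $V_{H,\Om_2}$ then provides $\|\nabla z_H\|_{0,\Om_2}\lesssim\|\nabla u_H\|_{0,\Om_2}$; using the uniform bounds on $\sigma(A)$ and the triangle inequality, $\|A^{1/2}\nabla\tilde w_H\|_{0,\Om_2}\lesssim\|A^{1/2}\nabla u_H\|_{0,\Om_2}$. On $\Om_1$, $w_{h,H}=v_{h,H}$, so the $\Om_1$-contribution to $\|\cdot\|_{h,H}$ matches exactly.

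It remains to handle the jump contribution: $[w_{h,H}]=[v_{h,H}] - z_H|_\Ga$. Applying the second estimate of Lemma~\ref{lem2} element-wise to $z_H\in V_{H,\Om_2}$ over interface elements in $\M_{\Ga_H}$ and invoking the previous $L^2$ bound yields $\|z_H\|_\Ga^2\lesssim H^{-1}\|z_H\|_{0,\Om_2}^2\lesssim H\|\nabla u_H\|_{0,\Om_2}^2$, whence $H^{-1}\|z_H\|_\Ga^2\lesssim\|A^{1/2}\nabla u_H\|_{0,\Om_2}^2$. Assembling the three contributions proves $\|w_{h,H}\|_{h,H}\lesssim\|v_{h,H}\|_{h,H}$. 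The main obstacle is the $H$-uniform $L^2$-stability of $\mathcal{C}_H^{-1}$, which rests on the spectral equivalence $M\eqsim D$; this is the structural ingredient that makes the rest of the argument go through and is where the quasi-uniformity hypothesis on $\M_{H,\Om_2}$ is essential.
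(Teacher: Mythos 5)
Your proposal is correct and follows essentially the same route as the paper's Appendix~A proof: invertibility and $H$-uniform $L^2$-stability of $\mathcal{C}_H^{-1}$ on $V_{H,\Omega_2}$ via the nodal matrix representation $D^{-1}M$ (your spectral equivalence $M\eqsim D\eqsim H^d I$ is just a repackaging of the paper's Cauchy--Schwarz computation $V^TMV=V^TDW$), combined with the fact that $\mathcal{C}_{h,H}$ is the identity on the $\Omega_1$-part, the interpolation estimate \eqref{k2}, an inverse inequality, and the trace estimate of Lemma~\ref{lem2} to control the gradient and jump contributions. The only cosmetic difference is that you bound the three pieces of the $\|\cdot\|_{h,H}$-norm of $z_H$ separately, while the paper estimates $\|v_{h,H}-(\mathcal{C}_{h,H}|_{V_{h,H}})^{-1}v_{h,H}\|_{h,H}$ in one step before the triangle inequality; both arguments rely on quasi-uniformity of $\mathcal{M}_{H,\Omega_2}$ in the same places.
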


The following lemma is crucial for  the error analysis, which can be proved by following the proof of \cite[Lemma 2.1]{MLocalization} or \cite[Lemma 1]{MR3587383}. We omit the details.
\begin{lemma}\label{cc}
For each ${v}_{0,H}$ $\in$ $V_{0,H}$, there exists a $v_{0,h}$ $\in$ $V_{0,h}$, such that
$\mathcal{C}_{h,H}v_{0,h}=v_{0,H}$, $\|v_{0,h}\|_{h,h}$ $\lesssim$ $\|v_{0,H}\|_{h,H}$ and ${{\rm supp}\,v_{0,h}}\subseteq {\rm supp}\,(\mathcal{C}_{h,H}v_{0,H})$.
\end{lemma}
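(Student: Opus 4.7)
My plan is to construct a bounded right-inverse of $\mathcal{C}_{h,H}$ from $V_{0,H}$ into $V_{0,h}$, following the Fortin-type arguments of \cite[Lemma~2.1]{MLocalization} and \cite[Lemma~1]{MR3587383}. The construction splits naturally across the interface. Since $v_{0,H}|_{\Omega_1}=0$ and property~\eqref{kk77} says that $\Pi_h$ is the identity on $V_{h,\Omega_1}$, the choice $v_{0,h}|_{\Omega_1}:=0$ automatically places $v_{0,h}$ in $V_{0,h}$ and yields $\mathcal{C}_{h,H}v_{0,h}|_{\Omega_1}=0=v_{0,H}|_{\Omega_1}$. The remaining task is to build $v_{0,h}|_{\Omega_2}\in V_{h,\Omega_2}$ with $\mathcal{C}_{H}(v_{0,h}|_{\Omega_2})=v_{0,H}|_{\Omega_2}$, plus the required support and energy bounds.

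For this $\Omega_2$ step I would attach to each interior coarse node $z\in\mathring{\mathcal{N}}_H$ a fine-scale bubble $b_z\in V_{h,\Omega_2}$ supported in $\omega_z:={\rm supp}(\Phi_z)$, normalized by $(b_z,\Phi_z)_{\Omega_2}=(1,\Phi_z)_{\Omega_2}$, with local scaling $\|\nabla b_z\|_{0,\omega_z}\lesssim\|\nabla\Phi_z\|_{0,\omega_z}$. Decomposing $v_{0,H}=\sum_z\alpha_z\Phi_z$, the pre-lift $\Psi v_{0,H}:=\sum_z\alpha_z b_z$ composes with $\mathcal{C}_H$ into an operator $F:=\mathcal{C}_H\Psi$ on $V_{0,H}$ whose nodal matrix is unit-diagonal, with off-diagonal entries $(b_{z'},\Phi_z)_{\Omega_2}/(1,\Phi_z)_{\Omega_2}$ only between overlapping patches; uniform invertibility with bounds independent of $h$ and $H$ then follows from a standard finite-overlap / diagonal-dominance argument. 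Setting $v_{0,h}|_{\Omega_2}:=\Psi F^{-1}v_{0,H}$ produces the interpolation identity, and the gradient part $\|A^{\frac12}\nabla v_{0,h}\|_{0,\Omega_2}\lesssim\|A^{\frac12}\nabla v_{0,H}\|_{0,\Omega_2}$ follows from the local stability of the $b_z$, the $\ell^2$-boundedness of $F^{-1}$, and the finite-overlap covering of $\Omega_2$. The support inclusion ${\rm supp}\,v_{0,h}\subseteq{\rm supp}\,(\mathcal{C}_{h,H}v_{0,H})$ is obtained by arranging the Fortin construction element-by-element as in the cited references, restricting the sum to bubbles $b_z$ for nodes $z$ whose coarse patches lie in ${\rm supp}\,(\mathcal{C}_H v_{0,H})$.

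I expect the main obstacle to be the interface-penalty contribution $\frac{\gamma_0}{h}\|[v_{0,h}]\|_\Gamma^2$ inside $\|v_{0,h}\|_{h,h}$: since $v_{0,h}$ vanishes on $\Omega_1$, this reduces to $\frac{\gamma_0}{h}$ times the squared trace of $v_{0,h}|_{\Omega_2}$ on $\Gamma$, whereas the right-hand side only supplies the weaker $\frac{\gamma_0}{H}\|[v_{0,H}]\|_\Gamma^2$, and a naive trace bound via Lemma~\ref{lem2} on bubbles meeting $\Gamma$ loses an unwanted factor $H/h\gg 1$. The remedy, consistent with the cited references, is to choose every bubble $b_z$ with $\omega_z\cap\Gamma\neq\emptyset$ so that its fine-mesh support sits strictly at interior fine nodes of $\omega_z$ away from $\Gamma$; the normalization $(b_z,\Phi_z)=(1,\Phi_z)$ still holds because $\Phi_z$ keeps substantial mass there. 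This forces $v_{0,h}|_\Gamma\equiv 0$ on the $\Omega_2$ side as well, kills the penalty term identically, and reduces $\|v_{0,h}\|_{h,h}\lesssim\|v_{0,H}\|_{h,H}$ to the gradient estimate established above.
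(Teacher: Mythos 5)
Your reduction to $\Omega_2$, and your identification of the real obstruction --- the mismatch between the $\frac{\gamma_0}{h}$-weighted jump on the left and the $\frac{\gamma_0}{H}$-weighted jump on the right, forcing the lift to have (essentially) zero trace on $\Gamma$ --- are both correct and are exactly the points the paper is sensitive to (the paper itself gives no proof, citing \cite[Lemma~2.1]{MLocalization} and \cite[Lemma~1]{MR3587383}). However, the core of your construction has a genuine gap: representing $v_{0,h}|_{\Omega_2}$ as a \emph{pure} bubble superposition $\Psi F^{-1}v_{0,H}=\sum_z\alpha_z b_z$ cannot satisfy the stability bound. Your stated ingredients (local scaling $\|\nabla b_z\|\lesssim\|\nabla\Phi_z\|$, $\ell^2$-boundedness of $F^{-1}$, finite overlap) only give $\|\nabla v_{0,h}\|_{0,\Omega_2}^2\lesssim\sum_z\alpha_z^2\|\nabla\Phi_z\|^2\eqsim H^{-2}\|v_{0,H}\|_{L^2(\Omega_2)}^2$, because the bubbles, unlike the hat functions, do not form a partition of unity and so there is no cancellation between neighboring terms. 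Concretely, take $v_{0,H}\equiv 1$ on an $O(1)$ plateau of $\Omega_2$ away from $\partial\Omega$ and from $\Gamma$, decaying to zero over $O(1)$ distance; then $\|v_{0,H}\|_{h,H}=O(1)$, while every admissible bubble lift with coefficients $\alpha_z\approx 1$ on the plateau has $\|\nabla v_{0,h}\|_{0,\Omega_2}\sim H^{-1}$, so the claimed inequality fails by an unbounded factor. (A secondary issue: $F^{-1}$ is a global matrix inverse, so $\mathrm{supp}\,v_{0,h}$ is not contained in $\mathrm{supp}(\mathcal{C}_{h,H}v_{0,H})$; simply discarding bubbles outside that set destroys the identity $\mathcal{C}_{h,H}v_{0,h}=v_{0,H}$ unless the bubbles are made locally biorthogonal to the neighboring $\Phi_y$, which you do not arrange. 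Also, near $\Gamma$ the bubbles must sit an $O(H)$ --- not merely $O(h)$ --- distance from $\Gamma$, or else $\|\nabla b_z\|\lesssim\|\nabla\Phi_z\|$ fails.)

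The missing idea, which is how the constructions in the cited lemmas work, is \emph{defect correction}: keep $v_{0,H}$ itself as the macroscopic part of the lift and use the bubbles only to correct the quasi-interpolation defect. Schematically, $v_{0,h}:=\chi\,v_{0,H}+\sum_z\beta_z b_z$, where $\chi$ cuts $v_{0,H}$ down to zero at $\Gamma$ over one layer of coarse elements (its gradient cost is $\lesssim\|\nabla v_{0,H}\|_{0,\Omega_2}+H^{-1/2}\|[v_{0,H}]\|_{\Gamma}\lesssim\|v_{0,H}\|_{h,H}$, which is precisely where the restriction to $V_{0,H}$ and the $\frac{\gamma_0}{H}$-jump term are used), the $b_z$ vanish on $\Gamma$, and the coefficients $\beta_z$ restore the nodal averages of the defect $v_{0,H}-\mathcal{C}_{h,H}(\chi v_{0,H})$. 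Since this defect is locally of size $O(H)\|\nabla v_{0,H}\|$ by \eqref{k2} (plus an interface-layer contribution controlled by the jump term), the unavoidable $H^{-1}$ scaling of the bubble lift is absorbed, giving $\|v_{0,h}\|_{h,h}\lesssim\|v_{0,H}\|_{h,H}$; choosing the $b_z$ locally (bi)orthogonal to the neighboring hats makes the correction strictly local and yields the support inclusion. Your write-up lacks this defect-correction step, and without it the central estimate does not hold.
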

We emphasize that the above result holds for any function in $V_{0,H}$, not $V_{h,H}$, which is sufficient for the later analysis. In fact, we have tried to use $V_{h,H}$ instead of $V_{0,H}$, but the error estimate has become worse, multiplying by an additional factor  $H/h$.

\subsection{Error estimate for the ideal combined multiscale method}
The following theorem gives an error bound for the ideal multiscale method \eqref{solutionms}, where the correctors for the basis functions have to be solved globally (see \eqref{corg} and \eqref{VmshH}). The proposed  ideal combined multiscale method preserves the common linear order convergence {$O(H)$} for the $H^{1}$-error without suffering from preasymptotic effects due to the highly varying diffusion coefficient.
\begin{theorem}\label{thm:ideal1}
If $u_{h,h}$ and $u^{ms}_{h,H}$ are the solutions of the reference problem \eqref{solution1} and the approximation problem \eqref{solutionms} respectively, then it holds that
\begin{equation}\label{h15}
\|u_{h,h}-u^{ms}_{h,H}\|_{h,h}\lesssim H\|f\|_{L^{2}(\Omega)}.
\end{equation}
\end{theorem}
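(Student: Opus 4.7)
The plan is to exploit the Galerkin orthogonality built into the ideal method and the kernel characterization of $W_{0,h}$, and then to convert an $L^2$-type duality on $\Omega_2$ into a power of $H$ via the approximation property of the Cl\'ement operator $\mathcal{C}_H$ from Lemma~\ref{lemstab}.

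First, I set $e:=u_{h,h}-u^{ms}_{h,H}$. By Proposition~\ref{prop1}, $e\in W_{0,h}$, so $e|_{\Om_1}=0$ and $\mathcal{C}_{H}(e|_{\Om_2})=0$. By construction of $V^{ms}_{h,H}=(I-Q_h)V_{h,H}$ and the definition \eqref{corg} of $Q_h$, the spaces $V^{ms}_{h,H}$ and $W_{0,h}$ are $a_\Om$-orthogonal, so in particular $a_\Om(u^{ms}_{h,H},e)=0$. Combining this with the reference equation \eqref{solution1} tested against $e\in W_{0,h}\subset V_{h,h}$ gives $a_\Om(e,e)=a_\Om(u_{h,h},e)=(f,e)$. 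Using the coercivity \eqref{coercive} and the elementary observation $\|e\|_{h,h}\le\normE{e}$, this yields
\begin{equation*}
\|e\|_{h,h}^{2}\;\lesssim\;a_\Om(e,e)\;=\;(f,e)\;=\;(f,e)_{\Om_2},
\end{equation*}
where the last equality uses $e|_{\Om_1}=0$.

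Next I use the kernel property on $\Om_2$. Since $e|_{\Om_2}\in V_{h,\Om_2}$ and $\mathcal{C}_H(e|_{\Om_2})=0$, the approximation estimate \eqref{k2} in Lemma~\ref{lemstab} rewrites as $\|e\|_{0,T}=\|e-\mathcal{C}_He\|_{0,T}\lesssim H\|\nabla e\|_{0,\hat T}$ for every $T\in\mathcal{M}_{H,\Om_2}$. Summing over $T$ and using the finite overlap of the patches $\hat T$ gives $\|e\|_{0,\Om_2}\lesssim H\|\nabla e\|_{0,\Om_2}$. Applying Cauchy--Schwarz and the uniform spectral bound $\alpha$ on $A$ then yields
\begin{equation*}
(f,e)_{\Om_2}\;\le\;\|f\|_{L^2(\Om)}\|e\|_{0,\Om_2}\;\lesssim\;H\|f\|_{L^2(\Om)}\|A^{1/2}\nabla_h e\|_{0,\Om_2}\;\le\;H\|f\|_{L^2(\Om)}\|e\|_{h,h}.
\end{equation*}
Combining the two displays and dividing by $\|e\|_{h,h}$ gives the desired bound \eqref{h15}.

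The only delicate point is justifying the passage $(f,e)=(f,e)_{\Om_2}$ together with the inverse Poincar\'e-type bound $\|e\|_{0,\Om_2}\lesssim H\|\nabla e\|_{0,\Om_2}$; both rely crucially on the specific choice of the quasi-interpolation $\mathcal{C}_{h,H}$ in \eqref{ChH}, in particular on the $L^2$-projection component $\Pi_h$ in $\Om_1$, which forces members of $W_{0,h}$ to vanish identically in $\Om_1$ (so that no cross-term between $\Om_1$ and $\Om_2$ survives and no penalty/jump term on $\Ga$ needs to be controlled separately). Everything else is a clean energy argument: orthogonality to reduce to a single duality pairing, kernel/approximation property of $\mathcal{C}_H$ to extract the factor $H$, and coercivity to absorb the remaining $\|e\|_{h,h}$ into the left-hand side.
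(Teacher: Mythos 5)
Your proof is correct and follows essentially the same route as the paper's: use Proposition~\ref{prop1} to place $e$ in $W_{0,h}$, then the $a_\Omega$-orthogonality of $V^{ms}_{h,H}$ and $W_{0,h}$ plus coercivity to reduce to the pairing $(f,e)$, and finally the kernel property $\mathcal{C}_{h,H}e=0$ together with the Cl\'ement estimate \eqref{k2} to extract the factor $H$. The only cosmetic difference is that you phrase the last step as $(f,e)_{\Omega_2}$ with $\|e\|_{0,\Omega_2}\lesssim H\|\nabla e\|_{0,\Omega_2}$, whereas the paper writes $(f,e-\mathcal{C}_{h,H}e)$ summed elementwise; these are identical since $\mathcal{C}_{h,H}e=0$.
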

\begin{proof}
Let $e_{h}=u_{h,h}-u^{ms}_{h,H}$. From Proposition~\ref{prop1} we have $e_{h}$ $\in$ $W_{0,h}$. Hence ${C}_{h,H}e_{h}=0$. Thus, using the Cauchy-Schwarz inequality and the coercivity \eqref{coercive} of $a_\Om$ on $V_{h,h}$, we have
\begin{align*}
\|e_{h}\|^{2}_{h,h}&\lesssim a_{\Omega}(e_{h},e_{h})=(f,e_{h})
=(f,e_{h}-\mathcal{C}_{h,H}e_{h})\\
&\leq \|f\|_{L^{2}(\Omega)}\|e_{h}-\mathcal{C}_{h,H}e_{h}\|_{L^{2}(\Omega)}\\
&=\|f\|_{L^{2}(\Omega)}\sum\limits_{T \in \mathcal{M}_{H,\Omega_{2}}}\|e_{h}-\mathcal{C}_{H}e_{h}\|_{0,T}.
\end{align*}
Further, from \eqref{k2}, it follows that
\begin{align*}
\sum\limits_{T \in \mathcal{M}_{H,\Omega_{2}}}\|e_{h}-\mathcal{C}_{H}e_{h}\|_{0,T}
&\lesssim \sum\limits_{T \in \mathcal{M}_{H,\Omega_{2}}} H\|\nabla e_{h}\|_{0,\hat{T}}\lesssim H\|e_{h}\|_{h,h},
\end{align*}
which combines the above estimate yields the result immediately.
\end{proof}

\subsection{Error estimates for the localized method}
In this subsection we first estimate the errors between the correctors $Q_h^T$ and $Q_h^{T,L}$ due to the truncations to local patches. Then we provide $H^1$-~and~$L^2$- error bounds for the FE-LODM.

We will frequently make use of the following cut-off functions on element patches:
for each $T$ $\in$ $M_{H,\Omega_{2}}$ and $l_1<l_2$ $\in$ $\mathbb{N}$, the cut-off functions $\eta^{l_1,l_2}_{T}$ $\in$ $V_{h,H}$ satisfy
\begin{align}\label{bb}
&\eta^{l_1,l_2}_{T}|_{T_{l_1}}=1,\\
&\eta^{l_1,l_2}_{T}|_{\Omega \backslash T_{l_2}}=0,\label{bb2}\\
\|\nabla_h &\eta^{l_1,l_2}_{T}\|_{L^{\infty}(\Omega)}\lesssim \frac{1}{(l_2-l_1)H_{T}}.\label{66}
\end{align}

Let $I_{h,h}$ ( $I_{h,h}|_{\Omega_{i}}: H_{\Gamma_{i}}^{1}(\Omega_{i})\cap C(\bar{\Omega}_{i})\mapsto V_{h,\Omega_{i}}, i=1,2$ ) be the linear Lagrange interpolation operator with respect to $\mathcal{M}_{h,h}$. The following lemma provides a stability estimate of the operator $I_{h,h}$.
\begin{lemma}\label{lem8}
For $T\in\mathcal{M}_{H,\Omega_{2}}$, assume that $\eta_T^{s,n}, n>s>0\in\mathbb{N}$ is the cut-off function which satisfies \eqref{bb}--\eqref{66}. Then for $w\in W_{0,h}$, the following estimates hold
\begin{align}
\|I_{h,h}(\eta_T^{s,n}w)\|_{h,h}&\lesssim \|w\|_{h,h,T_{n+1}}\label{lem81},\\
\|\eta_T^{s,n}w-I_{h,h}(\eta_T^{s,n}w)\|_{h,h}&\lesssim \|w\|_{h,h,T_{n+1}\backslash T_{s-1}},\label{lem82}\\
\|I_{h,h}(\eta_T^{s,n}w)\|_{h,h,T_{n}\backslash T_{s}}&\lesssim \|w\|_{h,h,T_{n+1}\backslash T_{s-1}}\label{lem83},\\
\|I_{h,h}(1-\eta_T^{s,n})w\|_{h,h}&\lesssim \|w\|_{h,h,\Om\backslash T_{s-1}}\label{lem84}.
\end{align}
\end{lemma}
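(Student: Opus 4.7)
The plan is to reduce all four bounds to a common toolkit: (i) standard $P_1$ interpolation error estimates on each fine element $t$, (ii) the Poincar\'{e}-type inequality $\|w\|_{0,T}\ls H\|\nabla w\|_{0,\hat T}$ for $w\in W_{0,h}$ on each coarse element $T\in\mathcal{M}_{H,\Om_2}$, which follows directly from \eqref{k2} since $\mathcal{C}_H w=0$ in $\Om_2$, (iii) the trace inequalities of Lemma~\ref{lem2}, and (iv) the bounds \eqref{bb}--\eqref{66} on the cutoff, henceforth $\eta:=\eta_T^{s,n}$. I rely on the geometric facts that $T_s$, $T_n$ and $\Om\setminus T_n$ are unions of coarse elements, hence also of fine elements (since the fine mesh refines the coarse) and that $\Ga_h$ refines $\Ga_H$; consequently $\eta$ is nodally zero at all fine-mesh vertices outside $T_n$ and nodally one at all vertices in $T_s$. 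I also use that $w|_{\Om_1}=0$ for $w\in W_{0,h}$, so that every jump across $\Ga$ reduces to the $\Om_2$-side trace.

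The cornerstone is \eqref{lem82}. Because $\eta\equiv 1$ on $T_s$ and $\eta\equiv 0$ on $\Om\setminus T_n$, the product $\eta w$ agrees with $w\in V_{h,h}$ on $T_s$ and vanishes on $\Om\setminus T_n$, so $I_{h,h}$ reproduces it on both regions; hence $\eta w-I_{h,h}(\eta w)$ is supported in $T_n\setminus T_s$. On any fine element $t$ there, $\eta, w\in P_1(t)$ forces $\abs{\eta w}_{H^2(t)}\ls\|\nabla\eta\|_{L^\infty}\|\nabla w\|_{0,t}\ls\frac{1}{(n-s)H}\|\nabla w\|_{0,t}$. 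The standard $P_1$ bounds then give $\|\nabla(\eta w-I_{h,h}(\eta w))\|_{0,t}\ls h\abs{\eta w}_{H^2(t)}$, while Lemma~\ref{lem2} combined with the $L^2$ and $H^1$ interpolation estimates gives $\|\eta w-I_{h,h}(\eta w)\|_{0,\pa t}\ls h^{3/2}\abs{\eta w}_{H^2(t)}$, so the jump contribution is $\frac{\ga_0}{h}\|[\cdot]\|^2_{\Ga\cap t}\ls(h/H)^2\|\nabla w\|_{0,t}^2$. Summing over $t\subset T_n\setminus T_s$ yields \eqref{lem82}, with the slight expansion to $T_{n+1}\setminus T_{s-1}$ arising when I later use the coarse-element Poincar\'{e} bound on neighboring patches.

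From \eqref{lem82}, the other three bounds follow by triangle inequality combined with direct estimates of $\eta w$ or $(1-\eta)w$. For \eqref{lem81} and \eqref{lem83}, writing $\nabla(\eta w)=\eta\nabla w+w\nabla\eta$ bounds the first term by $\|\nabla w\|_0$ on the relevant region and the second term, using $\|\nabla\eta\|_\infty\ls\frac{1}{(n-s)H}$ together with the Poincar\'{e}-type estimate, by $\|\nabla w\|_{0,T_{n+1}\setminus T_{s-1}}$; for the jump piece, continuity of $\eta$ yields $[\eta w]=\eta[w]$ with $|\eta|\le 1$, and $P_1$-stability of the $\Om_2$-side nodal interpolant on each fine edge of $\Ga_h$, localized by $\eta$ vanishing outside $T_n$, gives $\|[I_{h,h}(\eta w)]\|_\Ga\ls\|[w]\|_{\Ga\cap T_n}$. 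Finally, \eqref{lem84} follows from the algebraic identity $I_{h,h}((1-\eta)w)=w-I_{h,h}(\eta w)=(1-\eta)w+(\eta w-I_{h,h}(\eta w))$ (valid since $w\in V_{h,h}$ is reproduced by $I_{h,h}$), combined with \eqref{lem82} and a product-rule bound for $(1-\eta)w$ that exploits $(1-\eta)\equiv 0$ on $T_s$.

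I expect the main technical obstacle to be the jump piece on $\Ga$: one must exploit carefully that $\Ga_h$ refines $\Ga_H$ (so each fine edge on $\Ga$ is contained in a unique coarse edge), that $\eta$ vanishes nodally outside $T_n$ (so $I_{h,h}(\eta w)|_\Ga$ is supported in $\Ga\cap T_n$), and that $w|_{\Om_1}=0$ (so the jump reduces to a single-sided trace on $\Om_2$). Only after these three geometric facts are combined does the factor $\frac{\ga_0}{h}$ in the norm balance the $h^{3}$-type scaling coming from the trace plus interpolation estimates.
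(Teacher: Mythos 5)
Your proposal is correct and uses essentially the same ingredients as the paper's proof in \ref{a:lem8}: the product rule for $\nabla(\eta_T^{s,n}w)$ combined with the cut-off bound \eqref{66}, the Poincar\'e-type estimate obtained from \eqref{k2} via $\mathcal{C}_{h,H}w=0$, standard fine-mesh interpolation estimates together with the local trace inequality of Lemma~\ref{lem2} for the interface jump terms, the reproduction property $I_{h,h}w=w$, and the fact that $w|_{\Om_1}=0$ so all jumps reduce to one-sided $\Om_2$ traces. The only differences are organizational (you prove \eqref{lem82} first via per-element $H^2$-seminorm estimates and deduce the others, including \eqref{lem84} through the identity $I_{h,h}((1-\eta)w)=(1-\eta)w+(\eta w-I_{h,h}(\eta w))$, whereas the paper proves \eqref{lem81} first and treats \eqref{lem83}--\eqref{lem84} as analogous), which does not change the substance of the argument.
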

The proof of this lemma is similar to that of  \cite[Lemma A.2]{HenningLocalized}, except that we have to deal with the elements near the interface between coarse and fine meshes. For convenience of the reader, we arrange it in \ref{a:lem8}.

The following key lemma says that the errors of the localized correction problems decay exponentially with respect to the number of truncation layers $L$.
\begin{lemma}\label{hh12}
Let $u_{h,h}$ be the reference solution to \eqref{solution1} and $u_{h,H}^{ms}\in V_{h,H}^{ms}$ be the ideal solution to \eqref{solutionms}, respectively. Denote by $u_{h,H}=\CC u_{h,H}^{ms}\in V_{h,H}.$ Further, for $T\in \mathcal{M}_{H,\Omega_{2}}$ and its element patch $T_L$,
let $q^T_h=Q_h^T(u_{h,H})$ and $q_h^{T,L}=Q_h^{T,L}(u_{h,H})$ be the global and local multiscale-corrected {solution} obtained in \eqref{ahti} and \eqref{corl}, respectively. Then there exists a constant $0<\theta<1$ independent of $L, h, H$, and $T$, such that
\begin{equation*}
\|q^T_h-q_h^{T,L}\|_{h,h}\lesssim\theta^{L}\|u_{h,H}\|_{h,h,\tilde{T}},
\end{equation*}
where
$\tilde{T}$ is defined in Section \ref{sec34}.
\end{lemma}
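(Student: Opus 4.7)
The plan is to follow the exponential-decay strategy from the LOD literature (cf.\ \cite{MLocalization,HenningLocalized}), adapted to the bilinear form $a_\Omega$, which contains interior-penalty terms on $\Gamma$ and therefore requires care near the coarse--fine interface.

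First I would set $e^L := q_h^T - q_h^{T,L} \in W_{0,h}$; subtracting \eqref{corl} from \eqref{ahti} yields the Galerkin orthogonality
\begin{equation*}
a_\Omega(e^L, w) = 0 \quad \forall\, w \in W_{0,h}(T_L).
\end{equation*}
Combined with the coercivity \eqref{coercive} this gives $\|e^L\|_{h,h}^2 \lesssim a_\Omega(e^L, e^L - w)$ for any $w \in W_{0,h}(T_L)$. The goal is then to construct $w$ so that $e^L - w$ is concentrated in a thin shell near $\partial T_L$, furnishing a one-shell estimate of the form
\begin{equation*}
\|e^L\|_{h,h}^2 \lesssim C_0 \, \|e^L\|_{h,h, \Omega \setminus T_{L-m}}^2
\end{equation*}
for a fixed integer $m$ of buffer layers.

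For the construction I would take the cutoff $\eta := \eta_T^{L-2,L-1}$ satisfying \eqref{bb}--\eqref{66} and set $w_0 := I_{h,h}(\eta e^L)$, which is supported inside $T_L$. In general $w_0 \notin W_{0,h}$ because $\mathcal{C}_{h,H} w_0$ may be nonzero on the coarse annulus where $\eta$ transitions. To correct this I would apply Lemma \ref{cc} to $\mathcal{C}_{h,H} w_0 \in V_{0,H}$, obtaining $v_{0,h} \in V_{0,h}$ with $\mathcal{C}_{h,H} v_{0,h} = \mathcal{C}_{h,H} w_0$, $\|v_{0,h}\|_{h,h} \lesssim \|\mathcal{C}_{h,H} w_0\|_{h,H}$, and support contained in that of $\mathcal{C}_{h,H} w_0$. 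Setting $w := w_0 - v_{0,h}$, a fixed enlargement of $L$ by a constant number of layers guarantees $w \in W_{0,h}(T_L)$. Writing $e^L - w = (e^L - I_{h,h}(\eta e^L)) + v_{0,h}$ and invoking \eqref{lem82}--\eqref{lem84} in Lemma \ref{lem8} together with the stability of $\mathcal{C}_{h,H}$ from Lemma \ref{lem}, the right-hand side is controlled by $\|e^L\|_{h,h, \Omega \setminus T_{L-m}}$; Cauchy--Schwarz and continuity of $a_\Omega$ then yield the one-shell estimate.

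The decisive step is to iterate. Replaying the same construction with $T_L$ replaced by the nested patches $T_{L-km}$ for $k = 0, 1, \dots, \lfloor L/m \rfloor$, one obtains a recursion between norms on expanding annular shells that contracts by a factor bounded away from $1$ at each step. Chaining these bounds gives exponential decay of the shell norms, and combining with the global a priori stability $\|e^L\|_{h,h} \le \|q_h^T\|_{h,h} + \|q_h^{T,L}\|_{h,h} \lesssim \|u_{h,H}\|_{h,h, \tilde T}$, which follows from testing \eqref{ahti} and \eqref{corl} with $q_h^T$ and $q_h^{T,L}$ respectively and using continuity of $a_{\tilde T}$, yields the claim with a uniform $\theta \in (0,1)$ depending only on the shape regularity and the spectral bounds $\alpha,\beta$. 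The principal technical obstacle is the delicate bookkeeping of cutoffs, $I_{h,h}$-interpolation, and Fortin correction in the layer bordering the coarse--fine interface $\Gamma$, where the penalty terms contribute nontrivially to $\|\cdot\|_{h,h}$; Lemma \ref{lem8}, which was prepared expressly for this purpose, will be the workhorse that absorbs these interface complications.
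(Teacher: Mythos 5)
Your outline reproduces the paper's Steps 1 and 4 correctly (Galerkin orthogonality for $e^L:=q_h^T-q_h^{T,L}$, a C\'ea-type bound, a cutoff corrected through Lemma \ref{cc} so that the test function stays in $W_{0,h}(T_L)$, and the final stability bound $\|q_h^T\|_{h,h}+\|q_h^{T,L}\|_{h,h}\lesssim\|u_{h,H}\|_{h,h,\tilde T}$), but the decisive step --- the source of the factor $\theta^L$ --- has a genuine gap. The only orthogonality you invoke is $a_\Omega(e^L,w)=0$ for $w\in W_{0,h}(T_L)$, i.e.\ against fine-scale functions supported \emph{inside} $T_L$. ``Replaying the same construction with $T_L$ replaced by $T_{L-km}$'' then only yields estimates of the form $\|e^L\|_{h,h}\lesssim\|e^L\|_{h,h,\Omega\backslash T_{L-km-m}}$ with ever larger exterior sets, which are weaker than your $k=0$ estimate and contain no contraction; and if instead you run a Caccioppoli-type iteration for $e^L$ with interior cutoffs, the orthogonality you have produces decay of $\|e^L\|_{h,h,T_l}$ as $l$ \emph{decreases}, i.e.\ smallness of $e^L$ near $T$, which says nothing about the exterior shell quantity $\|e^L\|_{h,h,\Omega\backslash T_{L-m}}$ left over from your one-shell estimate (and on the annulus $T_L\backslash T_{L-m}$ that quantity still contains the unknown localized corrector $q_h^{T,L}$). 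So the asserted ``recursion between shell norms that contracts'' is not justified by anything in your outline.

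The mechanism that actually produces the exponential decay is a different orthogonality: the right-hand side of the corrector problem \eqref{ahti} is the \emph{localized} functional $a_{\tilde T}(u_{h,H},\cdot)$, hence $a_\Omega(q_h^T,w)=0$ for every $w\in W_{0,h}$ supported away from $\tilde T$. The paper exploits exactly this in its Step 3: with $\varepsilon=1-\eta_T^{m+2,M-2}$ and the corrected test function $I_{h,h}(\varepsilon^2 q_h^T)-\gamma$ supported outside $T_m$, one obtains $\|q_h^T\|^2_{h,h,\Omega\backslash T_M}\le C_0\big(\|q_h^T\|^2_{h,h,\Omega\backslash T_m}-\|q_h^T\|^2_{h,h,\Omega\backslash T_M}\big)$, a contraction of \emph{exterior} norms of the global corrector; moreover, in its Step 1 the paper takes $w$ to be the corrected cutoff of $q_h^T$ (not of $e^L$), so that the C\'ea bound reads $\|e^L\|_{h,h}\lesssim\|q_h^T\|_{h,h,\Omega\backslash T_{L-5}}$ and the decay of $q_h^T$ alone, chained over $L/5$ shells and combined with Step 4, finishes the proof. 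To repair your argument you must either switch to this choice of $w$ and prove the exterior-decay recursion for $q_h^T$ using the vanishing of $a_{\tilde T}(u_{h,H},\cdot)$ on test functions supported away from $\tilde T$, or, if you keep your version of the one-shell estimate, additionally establish the analogous decay of $q_h^{T,L}$ inside $T_L$; without this ingredient the factor $\theta^L$ cannot be obtained.
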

The proof of this lemma is similar to that given in \cite{MLocalization} and \cite{HenningLocalized}, but with some special details related to the interface elements need to be accounted for. To make the error analysis clearer, we  arrange the proof of this lemma in \ref{a:hh12}.

The following theorem gives the $H^1$-error estimate for the FE-LODM. Using this theorem, we can quantify how many truncation layers in the localization patches can ensure the linear convergence of $O(H)$.
\begin{theorem}\label{theo} Suppose $\ga_0\ge\al_0$.
Let $u_{h,h}$ and $u^{ms,L}_{h,H}$ be the reference solution to \eqref{solution1} and the solution to the FE-LODM \eqref{solution2}, respectively. Then we have
\begin{equation}\label{err1:FELODM}
\|u_{h,h}-u^{ms,L}_{h,H}\|_{h,h}\lesssim
H\|f\|_{L^{2}(\Omega)}+\Big(\frac{H}{h}\Big)^\frac12L^{\frac{d}{2}}\theta^{L}\|f\|_{L^{2}(\Omega)},
\end{equation}
where $0<\theta<1$ is given in Lemma~\ref{hh12}. Moreover,  there exists a positive constant $L_0$ such that when $L\ge L_0|\log (Hh)^\frac12|$, we have the following estimate, which is of the same order as the ideal multiscale method,
\eq{\label{err2:FELODM}\|u_{h,h}-u^{ms,L}_{h,H}\|_{h,h}\lesssim H\|f\|_{L^{2}(\Omega)}.}
\end{theorem}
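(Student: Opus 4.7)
The plan is to combine a Céa-type quasi-optimality estimate with the localization bound of Lemma~\ref{hh12}. Since $V^{ms,L}_{h,H}\subset V_{h,h}$ by Remark~3.4(1), the solution $u^{ms,L}_{h,H}$ satisfies the Galerkin orthogonality $a_\Om(u_{h,h}-u^{ms,L}_{h,H},v)=0$ for all $v\in V^{ms,L}_{h,H}$. Using the coercivity \eqref{coercive}, the continuity \eqref{continuous}, and the equivalence of $\normE{\cdot}$ and $\|\cdot\|_{h,h}$ on $V_{h,h}$, this yields $\|u_{h,h}-u^{ms,L}_{h,H}\|_{h,h}\ls\inf_{v\in V^{ms,L}_{h,H}}\|u_{h,h}-v\|_{h,h}$. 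I would plug in the test function $v=(I-Q_h^L)u_{h,H}\in V^{ms,L}_{h,H}$, where $u_{h,H}=\CC u^{ms}_{h,H}\in V_{h,H}$ is the function identified in Remark~\ref{re13}. Since $u^{ms}_{h,H}=(I-Q_h)u_{h,H}$ by the same remark, the difference decomposes as
\eqn{u_{h,h}-v=(u_{h,h}-u^{ms}_{h,H})+(Q_h^L-Q_h)u_{h,H},}
whose first summand is already controlled by $H\|f\|_{L^2(\Om)}$ via Theorem~\ref{thm:ideal1}.

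It then remains to estimate the global localization error $\|(Q_h-Q_h^L)u_{h,H}\|_{h,h}$. Using the decomposition \eqref{QhD}, I would write $(Q_h-Q_h^L)u_{h,H}=\sum_{T\in\M_{H,\Om_2}}(q_h^T-q_h^{T,L})$. Each summand is supported in the patch $T_L$ of size $O(LH)$, so the family has finite overlap of order $L^d$, giving $\|(Q_h-Q_h^L)u_{h,H}\|_{h,h}^2\ls L^d\sum_T\|q_h^T-q_h^{T,L}\|_{h,h}^2$. Invoking Lemma~\ref{hh12} on each term and using once more that the patches $\{\tilde T\}$ have finite overlap, I obtain $\|(Q_h-Q_h^L)u_{h,H}\|_{h,h}\ls L^{d/2}\theta^L\|u_{h,H}\|_{h,h}$.

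The technical heart of the proof, and the source of the $(H/h)^{1/2}$ factor in \eqref{err1:FELODM}, is bounding $\|u_{h,H}\|_{h,h}$. Because $u_{h,H}\in V_{h,H}$ has a jump across $\Ga$ that is naturally controlled by $\|\cdot\|_{h,H}$ (penalty weight $\gamma_0/H$), but we need it in $\|\cdot\|_{h,h}$ (penalty weight $\gamma_0/h$), comparing \eqref{norm1} and \eqref{norm2} with $h\le H$ forces the switch $\|u_{h,H}\|_{h,h}\ls (H/h)^{1/2}\|u_{h,H}\|_{h,H}$. The $h,H$-norm is then controlled by chaining Lemma~\ref{inverse}, Lemma~\ref{lem}, and the stability \eqref{stability}:
\eqn{\|u_{h,H}\|_{h,H}\ls \|\mathcal{C}_{h,H}u^{ms}_{h,H}\|_{h,H}\ls \|u^{ms}_{h,H}\|_{h,H}\le\|u^{ms}_{h,H}\|_{h,h}\ls \|f\|_{L^2(\Om)},}
which yields \eqref{err1:FELODM}. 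This norm mismatch is the main obstacle; it cannot be avoided without stronger assumptions on the smallness of $[u_{h,H}]$ across $\Ga$.

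For \eqref{err2:FELODM}, the goal is to make $(H/h)^{1/2}L^{d/2}\theta^L\ls H$, i.e., $\theta^L\ls (Hh)^{1/2}/L^{d/2}$. Since $\log\theta<0$, I would choose $L=L_0|\log(Hh)^{1/2}|$ with $L_0>0$ large enough (depending on $\theta$ and $d$) so that the exponential decay $\theta^L$ dominates both the polynomial factor $L^{d/2}$ and the algebraic factor $(h/H)^{1/2}$, producing the optimal-order bound.
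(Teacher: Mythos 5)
Your overall architecture is the same as the paper's: C\'ea quasi-optimality over $V^{ms,L}_{h,H}$ with the competitor $(I-Q_h^L)u_{h,H}$, the splitting into the ideal error (Theorem~\ref{thm:ideal1}) plus the localization error $z=(Q_h-Q_h^L)u_{h,H}$, the norm switch $\|u_{h,H}\|_{h,h}\ls (H/h)^{1/2}\|u_{h,H}\|_{h,H}$ followed by Lemmas~\ref{inverse} and \ref{lem} and the stability \eqref{stability}, and the choice of $L$ (absorbing $L^{d/2}$ into $\theta_1^L$ with $\theta_1=(1+\theta)/2$). However, there is a genuine gap in your estimate of $\|z\|_{h,h}$: you justify the almost-orthogonality bound $\|\sum_T(q_h^T-q_h^{T,L})\|_{h,h}^2\ls L^d\sum_T\|q_h^T-q_h^{T,L}\|_{h,h}^2$ by claiming that ``each summand is supported in the patch $T_L$.'' That is false. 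The localized corrector $q_h^{T,L}\in W_{0,h}(T_L)$ is indeed supported in $T_L$, but $q_h^T=Q_h^Tu_{h,H}$ solves the global problem \eqref{ahti} posed on all of $W_{0,h}$; it is not compactly supported in $T_L$, it merely decays away from $T$ (this decay is precisely what Lemma~\ref{hh12} quantifies). So the summands are not locally supported and the finite-overlap argument cannot be applied to them directly.

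The paper closes exactly this point with a more careful duality/localization argument: starting from $\|z\|^2_{h,h}\ls a_\Om(z,z)=\sum_{T}a_\Om\big((Q_h^T-Q_h^{T,L})u_{h,H},z\big)$, one uses, for each $T$, that $(Q_h^T-Q_h^{T,L})u_{h,H}$ is $a_\Om$-orthogonal to the function $I_{h,h}(z-\eta_T^{L+2,L+3}z)-b\in W_{0,h}$ supported outside $T_L$ (with $b$ produced by Lemma~\ref{cc} so that membership in $W_{0,h}$ and the support constraint both hold; see \eqref{eq11}--\eqref{eq13}). This replaces the global test function $z$ by the localized one $I_{h,h}(\eta_T^{L+2,L+3}z)+b$, and Cauchy--Schwarz together with Lemmas~\ref{lem} and \ref{lem8} then yields the factor $\|z\|_{h,h,T_{L+4}}$; the overlap counting $\sum_T\|z\|^2_{h,h,T_{L+4}}\ls L^d\|z\|^2_{h,h}$ is applied to these \emph{local norms of $z$}, not to the summands themselves. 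With this repair (the cut-off functions, Lemma~\ref{cc}, and the orthogonality step are the missing ingredients), your remaining steps --- Lemma~\ref{hh12} with finite overlap of the sets $\tilde T$, the $(H/h)^{1/2}$ norm switch, the stability chain, and the choice $L\ge L_0|\log(Hh)^{1/2}|$ --- coincide with the paper's proof and give \eqref{err1:FELODM} and \eqref{err2:FELODM}.
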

\begin{proof}
Let $u_{h,H}^{ms}\in V_{h,H}^{ms}$ be the solution to the ideal method \eqref{solutionms} using the global basis. From \eqref{DP} and \eqref{QhD}, the ideal solution can be rewritten as follows:
\eqn{
u_{h,H}^{ms}= u_{h,H}-
\sum_{T\in \mathcal{M}_{H,\Om_2}}Q_h^T(u_{h,H}),
}
where $u_{h,H}=\CC u_{h,H}^{ms} \in V_{h,H}$.
Denote by
\eqn{\widetilde{u}^{ms,L}_{h,H}&:=u_{h,H}-\sum_{T\in \mathcal{M}_{H,\Om_2}}Q_h^{T,L}(u_{h,H}),\\
z&:=\sum_{T\in \mathcal{M}_{H,\Omega_{2}}}\big(Q_h^T(u_{h,H})-Q_h^{T,L}(u_{h,H})\big).}
 It is easy to see that
\begin{equation}\label{eA}
\|u^{ms}_{h,H}-\widetilde{u}^{ms,L}_{h,H}\|_{h,h}=\|z\|_{h,h}.
\end{equation}
According to Lemma \ref{cc} with $v_{0,H}=\mathcal{C}_{h,H}I_{h,h}\big(z-\eta_T^{L+2,L+3}z\big)$, there exists a function $b\in V_{0,h}$ such that
\begin{align}
\mathcal{C}_{h,H}(b)=\mathcal{C}_{h,H}I_{h,h}\big(z-&\eta_T^{L+2,L+3}z\big),\label{eq11}\\
\|b\|_{h,h} \lesssim \big\|\mathcal{C}_{h,H}I_{h,h}\big(z-\eta_T^{L+2,L+3}z\big)\big\|_{h,H}
&=\big\|\mathcal{C}_{h,H}I_{h,h}\big(\eta_T^{L+2,L+3}z\big)\big\|_{h,H}, \label{eq12}
\end{align}
where we have used $\mathcal{C}_{h,H}I_{h,h}z=0$ to derive the last equality, which is a consequence of the fact that $z\in W_{0,h}$. 
From \eqref{bb2}, we have
$$\mathcal{C}_{h,H}I_{h,h}\big(z-\eta_T^{L+2,L+3}z\big)=-\mathcal{C}_{h,H}I_{h,h}\big(\eta_T^{L+2,L+3}z\big)=0\quad\text{in}\quad \Om\backslash T_{L+4},$$
which together with Lemma~\ref{cc}, implies that 
$${\mathrm{supp}}(b)\subseteq {\mathrm{supp}}\big({\mathcal{C}_{h,H}^2}I_{h,h}\big(z-\eta_T^{L+2,L+3}z\big)\big)\subseteq T_{L+5}\backslash T_L.$$
Therefore, from (\ref{eq11}), we have $I_{h,h}\big({z-}\eta_T^{L+2,L+3}z\big)-b\in W_{0,h}$ . Hence, from (\ref{ahti}) it follows that
\begin{equation}\label{eqor}
a_{\Omega}\big(Q_h^Tu_{h,H},I_{h,h}(z-\eta_T^{L-3,L-2}z)-b\big)=
a_{\tilde T }\big(u_{h,H},I_{h,h}(z-\eta_T^{L+2,L+3}z)-b\big)=0.
\end{equation}
Further, from $\mathrm{supp} (I_{h,h}\big({z-}\eta_T^{L+2,L+3}z\big)-b) \subset \Om_2\backslash T_L$, it follows that
\begin{equation*}
a_{\Omega}\big(Q_h^{T,L}u_{h,H},I_{h,h}(z-\eta_T^{L+2,L+3}z)-b\big)=0,
\end{equation*}
which combines with \eqref{eqor} yields
\begin{equation}\label{eq13}
a_{\Omega}\big((Q_h^T-Q_h^{T,L})u_{h,H}, z-I_{h,h}(\eta_T^{L+2,L+3}z)-b\big)=0.
\end{equation}
Therefore, from \eqref{coercive}, it follows that
\begin{align*}
\|z\|^{2}_{h,h}&\lesssim a_{\Omega}(z,z)\\
&=\sum\limits_{T \in\mathcal{M}_{H,\Omega_{2}}}a_{\Omega}\big({Q_h^Tu_{h,H}-Q_h^{T,L}u_{h,H},z}\big)\\
&=\sum\limits_{T\in \mathcal{M}_{H,\Omega_{2}}}a_{\Omega}\big((Q_h^T-Q_h^{T,L})u_{h,H},I_{h,h}(\eta_T^{L+2,L+3}z)+b\big).
\end{align*}
Further, using {\eqref{eq12} and Lemmas~\ref{lem} and} \ref{lem8},  we obtain
\begin{align*}
\|z\|^{2}_{h,h}
&\lesssim \sum\limits_{T \in\mathcal{M}_{H,\Omega_{2}}}\|(Q_h^T-Q_h^{T,L})u_{h,H}\|_{h,h}\big(\|I_{h,h}(\eta_T^{L+2,L+3}z)\|_{h,h}+\|b\|_{h,h}\big)\\
&{\lesssim \sum\limits_{T \in\mathcal{M}_{H,\Omega_{2}}}\|(Q_h^T-Q_h^{T,L})u_{h,H}\|_{h,h}\|I_{h,h}(\eta_T^{L+2,L+3}z)\|_{h,h}}\\
&\lesssim \sum\limits_{T \in\mathcal{M}_{H,\Omega_{2}}}\|(Q_h^T-Q_h^{T,L})u_{h,H}\|_{h,h}\|z\|_{h,h,T_{L+4}}.
\end{align*}
Thus, by use of the Cauchy-Schwarz inequality, we have
\begin{align*}
\|z\|^{2}_{h,h}&\lesssim \bigg(\sum\limits_{T \in\mathcal{M}_{H,\Omega_{2}}}\|(Q_h^T-Q_h^{T,L})u_{h,H}\|_{h,h}^{2}\bigg)^{\frac{1}{2}}
\bigg(\sum\limits_{T \in\mathcal{M}_{H,\Omega_{2}}}\|z\|_{h,h,T_{L+4}}^{2}\bigg)^{\frac{1}{2}}\\
&\lesssim \bigg(\sum\limits_{T \in\mathcal{M}_{H,\Omega_{2}}}\|(Q_h^T-Q_h^{T,L})u_{h,H}\|_{h,h}^{2}\bigg)^{\frac{1}{2}}\left(L^{\frac{d}{2}}\|z\|_{h,h}\right),
\end{align*}
which yields
\[
\|z\|_{h,h}^{2}\lesssim L^{d}\sum\limits_{T \in\mathcal{M}_{H,\Omega_{2}}}\|(Q_h^T-Q_h^{T,L})u_{h,H}\|_{h,h}^{2}.
\]
 According to Lemma \ref{hh12}, we have
\begin{align*}
\|z\|_{h,h}^{2}&\lesssim L^{d}\theta^{2L}\sum\limits_{T \in\mathcal{M}_{H,\Omega_{2}}}\|u_{h,H}\|_{h,h,\tilde{T}}^{2} \lesssim L^{d}\theta^{2L}\|u_{h,H}\|_{h,h}^{2}.%\label{hEnH}
\end{align*}
Moreover, from Lemmas~\ref{inverse} and \ref{lem} and the stability estimate \eqref{stability} of $u_{h,H}^{ms}$, we have
\begin{align*}
\|u_{h,H}\|_{h,h}^{2}&\lesssim \frac{H}{h}\|u_{h,H}\|_{h,H}^{2}=\frac{H}{h}\|(\C_{h,H}|_{V_{h,H}})^{-1}\C_{h,H}u_{h,H}^{ms}\|_{h,H}^{2}\\
&\ls \frac{H}{h}\|\C_{h,H}u_{h,H}^{ms}\|_{h,H}^{2}\ls\frac{H}{h}\|u_{h,H}^{ms}\|_{h,H}\ls \frac{H}{h}\|f\|_{L^{2}(\Omega)}.
\end{align*}
Thus,
\begin{equation}\label{eq211}
\|z\|_{h,h}\lesssim \Big(\frac{H}{h}\Big)^\frac12 L^{\frac{d}2}\theta^{L}\|f\|_{L^{2}(\Omega)}.
\end{equation}
Noting that $V_{h,H}^{ms,L}\subseteq V_{h,h}$, from the continuity and coercivity of $a_\Om$ \eqref{continuous}--\eqref{coercive}, we have the following estimate of C\'ea lemma type:
\begin{align*}
\|u_{h,h}-u^{ms,L}_{h,H}\|_{h,h}\lesssim \inf\limits_{v^{ms,L}_{h,H}\in V_{h,H}^{ms,L}}\|u_{h,h}-v^{ms,L}_{h,H}\|_{h,h},
\end{align*}
which {implies that}
\eqn{\|u_{h,h}-u^{ms,L}_{h,H}\|_{h,h}\lesssim  \|u_{h,h}-\widetilde{u}^{ms,L}_{h,H}\|_{h,h}.}
Thus, we have
\begin{align}
\begin{split}\label{eA1}
\|u_{h,h}-u^{ms,L}_{h,H}\|_{h,h}&\leq
\|u_{h,h}-u^{ms}_{h,H}\|_{h,h}+\|u^{ms}_{h,H}-\widetilde{u}^{ms,L}_{h,H}\|_{h,h}\\
&=\|u_{h,h}-u^{ms}_{h,H}\|_{h,h}+\|z\|_{h,h},
\end{split}
\end{align}
which combines \eqref{h15} and \eqref{eq211} yields the estimate \eqref{err1:FELODM} immediately.

It remains to prove \eqref{err2:FELODM}. Let $\theta_1=\frac{1+\theta}{2}$. Noting that $L^{\frac{d}{2}}\theta^L\ls \theta_1^L$, we have
\eqn{\Big(\frac{H}{h}\Big)^\frac12L^{\frac{d}{2}}\theta^L\ls \Big(\frac{H}{h}\Big)^\frac12\theta_1^L\ls H, \quad\text{if } L\ge \frac{|\log (Hh)^\frac12|}{|\log\theta_1|},}
which implies that \eqref{err2:FELODM} holds. This completes the proof of the theorem.
\end{proof}
\begin{remark}\label{logH}
{\rm (1)} If $h=H^m$ for some constant $m>1$, then $|\log (Hh)^\frac12|\eqsim|\log H|$, and hence the condition $L\ge L_0|\log (Hh)^\frac12|$ becomes $L\ge L_0'|\log H|$ for some sufficiently large constant $L_0'$. Note that this is a standard condition for LOD type methods
\cite{MLocalization,HenningLocalized,ElfversonConvergence}.

{\rm (2)} In the case where  $h=H^3$, it is easy to see that  $\sqrt{\frac{H}{h}}=H^{-1}$. Hence \eqref{err1:FELODM} becomes
\begin{equation}\label{Oesti}
\|u_{h,h}-u^{ms,L}_{h,H}\|_{h,h}\lesssim
H\|f\|_{L^{2}(\Omega)}+H^{-1}L^{\frac{d}{2}}\theta^{L}\|f\|_{L^{2}(\Omega)},
\end{equation}
which is the same result as those of
the methods mentioned in \cite{MLocalization,petrovlod,ElfversonConvergence}.
We emphasize that in our later numerical experiments, we choose $h=H^m$ for some constant $1<m<3$, which follows that $\sqrt{\frac{H}{h}}<H^{-1}$. This means in this case the estimate \eqref{err1:FELODM} is better than \eqref{Oesti}.
\end{remark}

The following theorem gives the $L^2$ error estimate of the proposed {FE-LODM}.
\begin{theorem} \label{theol2} Suppose $\ga_0\ge\al_0$. Then we have the following estimate:
\begin{equation}\label{hh19}
\|u_{h,h}-u^{ms,L}_{h,H}\|_{L^{2}(\Omega)}\lesssim
\Big(H+\Big(\frac{H}{h}\Big)^\frac12 L^{\frac{d}{2}}\theta^{L}\Big)^{2}\|f\|_{L^{2}(\Omega)}.
\end{equation}
Moreover, there exists a positive constant $L_0$ such that when $L\ge L_0|\log (Hh)^\frac12|$, we have the following estimate,
\eq{\|u_{h,h}-u^{ms,L}_{h,H}\|_{L^{2}(\Omega)}\lesssim H^2\|f\|_{L^{2}(\Omega)}.}
\end{theorem}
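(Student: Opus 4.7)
The plan is to run a standard Aubin--Nitsche duality argument, exploiting the symmetry of $a_\Om$, the Galerkin orthogonality of the FE--LODM, and the already-established $H^1$-type estimate of Theorem~\ref{theo}. Set $e:=u_{h,h}-u^{ms,L}_{h,H}\in V_{h,h}$. First I would introduce the discrete dual problem on the reference space: find $\phi_{h,h}\in V_{h,h}$ such that $a_\Om(v_{h,h},\phi_{h,h})=(e,v_{h,h})$ for all $v_{h,h}\in V_{h,h}$, which is well-posed under the assumption $\ga_0\ge\al_0$ by the coercivity \eqref{coercive}. Alongside this I would define its FE--LODM approximation $\phi^{ms,L}_{h,H}\in V^{ms,L}_{h,H}$ satisfying $a_\Om(v^{ms,L}_{h,H},\phi^{ms,L}_{h,H})=(e,v^{ms,L}_{h,H})$ for all $v^{ms,L}_{h,H}\in V^{ms,L}_{h,H}$.

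Next, testing the dual problem with $v_{h,h}=e$ and using symmetry of $a_\Om$ gives $\|e\|_{L^2(\Om)}^2=a_\Om(e,\phi_{h,h})$. Since $V^{ms,L}_{h,H}\subset V_{h,h}$, subtracting the two primal equations \eqref{solution1} and \eqref{solution2} yields the Galerkin orthogonality $a_\Om(e,v^{ms,L}_{h,H})=0$ for every $v^{ms,L}_{h,H}\in V^{ms,L}_{h,H}$; in particular $a_\Om(e,\phi^{ms,L}_{h,H})=0$. Therefore
\begin{equation*}
\|e\|_{L^2(\Om)}^{2}=a_\Om\bigl(e,\phi_{h,h}-\phi^{ms,L}_{h,H}\bigr).
\end{equation*}
Applying the continuity bound \eqref{continuous} of $a_\Om$ together with the equivalence $\normE{\cdot}\eqsim\|\cdot\|_{h,h}$ on $V_{h,h}$ (noted right after \eqref{norm1}), I obtain
\begin{equation*}
\|e\|_{L^2(\Om)}^{2}\lesssim \|e\|_{h,h}\,\bigl\|\phi_{h,h}-\phi^{ms,L}_{h,H}\bigr\|_{h,h}.
\end{equation*}

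To finish I would invoke Theorem~\ref{theo} twice, which is legitimate because the proof there only uses that the source is in $L^2$. For the primal error \eqref{err1:FELODM} gives $\|e\|_{h,h}\lesssim \bigl(H+(H/h)^{1/2}L^{d/2}\theta^{L}\bigr)\|f\|_{L^2(\Om)}$, while applied to the dual problem with source $e\in L^2(\Om)$ it gives $\|\phi_{h,h}-\phi^{ms,L}_{h,H}\|_{h,h}\lesssim \bigl(H+(H/h)^{1/2}L^{d/2}\theta^{L}\bigr)\|e\|_{L^2(\Om)}$. Substituting and cancelling one factor of $\|e\|_{L^2(\Om)}$ yields \eqref{hh19}. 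The second assertion under $L\ge L_0|\log(Hh)^{1/2}|$ follows verbatim by replacing \eqref{err1:FELODM} with the sharpened bound \eqref{err2:FELODM} in both applications.

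The only delicate point I anticipate is making sure the duality step is clean in the IPCDG setting: one must verify that Theorem~\ref{theo} is available for dual data that is merely $L^2$ (not a prescribed PDE source) and that the interior penalty and average/jump terms in $a_\Om$ do not spoil either the symmetry used to pass from $a_\Om(e,\phi_{h,h})$ back to $(e,e)$ or the continuity bound on $V_{h,h}$. Both are already embedded in the framework of Section~2--3 (the form $a_\Om$ is symmetric by construction in \eqref{aom}, and \eqref{continuous} holds on a space containing $V_{h,h}$), so no additional machinery beyond Theorem~\ref{theo} is required.
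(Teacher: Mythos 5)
Your proposal is correct and follows essentially the same argument as the paper: a duality argument with the discrete dual problem in $V_{h,h}$ and its FE--LODM approximation, Galerkin orthogonality of the error, continuity of $a_\Om$, and two applications of Theorem~\ref{theo} (once to the primal problem with source $f$ and once to the dual problem with $L^2$ source $e$). The only cosmetic difference is that the paper also writes down the continuous dual problem before discretizing it, which plays no essential role in the estimate.
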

\begin{proof} It suffices to prove \eqref{hh19}. Let $e_{h}=u_{h,h}-u^{ms,L}_{h,H}.$
We consider the dual problem
\begin{equation*}
 \left\{ \begin{aligned}
 -\nabla\cdot(A\nabla {w})&={e_h} \quad\mathrm{in}\,\,\Omega,\\
 w&=0\quad\,\,\mathrm{on} \,\,\partial\Omega.
 \end{aligned} \right.
 \end{equation*}
 {Let $w_{h,h}\in V_{h,h}$ be the IPCDG approximation of $w$:}
$$
a_{\Omega}(w_{h,h},\phi_{h,h})=(e_{h},\phi_{h,h})\quad \forall\,\phi_{h,h}\in V_{h,h},
$$
and let  $w^{ms,L}_{h,H}\in V^{ms,L}_{h,H}$ be the FE-LOD approximation of $w$:
$$
a_{\Omega}\big(w^{ms,L}_{h,H},v^{ms,L}_{h,H}\big)=\big(e_{h},v^{ms,L}_{h,H}\big)\quad
\forall\, v^{ms,L}_{h,H}\in V^{ms,L}_{h,H}.
$$
Further, from (\ref{solution1}) and (\ref{solution2}), it follows that
\[
a_{\Omega}\big(e_{h},w^{ms,L}_{h,H}\big)=0.
\]
Thus {from Theorem~\ref{theo}} we have
\begin{align*}
\|e_{h}\|^{2}_{L^{2}(\Omega)}&=a_{\Omega}(w_{h,h},e_{h})=a_{\Omega}\big(e_{h},w_{h,h}-w^{ms,L}_{h,H}\big)\\
&\lesssim\|e_{h}\|_{h,h}\big\|w_{h,h}-w^{ms,L}_{h,H}\big\|_{h,h}\\
&\lesssim\|e_{h}\|_{h,h}\Big(H+\Big(\frac{H}{h}\Big)^\frac12 L^{\frac{d}{2}}\theta^{L}\Big)\|e_{h}\|_{L^{2}(\Omega)},
\end{align*}
which yields
\[
\|e_{h}\|_{L^{2}(\Omega)}\lesssim\|e_{h}\|_{h,h}\Big(H+\Big(\frac{H}{h}\Big)^\frac12 L^{\frac{d}{2}}\theta^{L}\Big)
\lesssim\Big(H+\Big(\frac{H}{h}\Big)^\frac12 L^{\frac{d}{2}}\theta^{L}\Big)^{2}\|f\|_{L^{2}(\Omega)}.
\]
This completes the proof of the theorem.
\end{proof}

\section{Numerical Tests}
In this section, we first  numerically study how the size of element patches affects the errors, and then illustrate the ability of the proposed FE-LODM to deal with singularities by solving multiscale elliptic problems  with a corner singularity and high-contrast channels and steady flow transporting through highly heterogeneous porous media driven by extraction wells, respectively. For comparison, we also present results of the local orthogonal decomposition method (LODM) in \cite{MR3926249} and the combined multiscale finite element method (FE-OMsPGM) introduced in \cite{DW2016}.  We use the IPCDG solution $u_{h,h}$ to \eqref{solution1} on a very fine mesh as a reference solution. Denote the energy norm by $\|\cdot\|_E:=\|\nabla A^{\frac{1}{2}}\cdot\|_{0,\Om_1\cup\Om_2}$. We measure the relative errors of an approximate solution $U_h$ in the $L^{2}$, $L^{\infty}$ and energy norms respectively as follows:
\begin{equation*}
\frac{\|{U_h}-u_{h,h}\|_{L^2}}{\|u_{h,h}\|_{L^2}},\quad \frac{\|U_h-u_{h,h}\|_{L^{\infty}}}{\|u_{h,h}\|_{L^{\infty}}},\quad
\frac{\|U_h-u_{h,h}\|_E}{\|u_{h,h}\|_E}.
\end{equation*}

\subsection{Effect of the size of the element patches}
In this subsection we study  how the size of element patches affects the errors by simulating the following example.
\begin{example}\label{ex1}
Consider the model problem \eqref{model} on the unit square $\Omega=(0,1)\times (0,1)$ with the source term $f\equiv 1$ and  different diffusion coefficients to be specified below. And we set $\Omega_1=(\frac14,\frac38)\times(\frac14,\frac38)$ as shown in Figure~\ref{fig0}.
\end{example}

\begin{figure}
\centering
\includegraphics[width=0.5\textwidth]{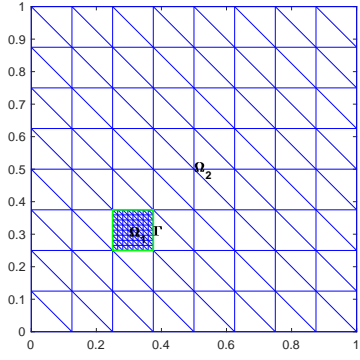}
\caption{An illustration of the separated domain used in {Example~\ref{ex1}}.}\label{fig0}
\end{figure}
First we {test the dependence of the error on the size of the element patches. Consider} the following diffusion coefficient:
\begin{equation}\label{coef1}
 {A}(x_{1},x_{2})=\frac{2+1.8\sin(2\pi x_{1}/\epsilon)}{2+1.8\cos(2\pi x_{2}/\epsilon)}+
\frac{2+1.8\sin(2\pi x_{2}/\epsilon)}{2+1.8\sin(2\pi x_{1}/\epsilon)}
\end{equation}
 with  $\epsilon=1/5$. We fix  $H$=$2^{-3}$, $h=2^{-7}$, and let the size of element patches vary.
 Table \ref{table03} shows the relative errors in the energy and $L^{2}$ norms on $\Omega$ and $\Omega_1$ between the reference solution $u_{h,h}$ and the FE-LOD solution $u_{h,H}^{ms,L}$ with $L=1, 2, 3, 6, 10$ and the ideal solution $u_{h,H}^{ms}$ as well, respectively.
\begin{table}[htp]
\caption{Example~\ref{ex1}: Relative errors for different $L$, $h=2^{-7}$, $H=2^{-3}$, $\gamma_{0}$=10.}\label{table03}
\begin{center}
\begin{tabular}{|c|c|c|c|c|} \hline
\multirow{2}{*}{\diagbox{\qquad $L$}{Error}}&
\multicolumn{2}{c|}{Error in $\Omega$}&\multicolumn{2}{c|}{Error in $\Omega_{1}$}\cr\cline{2-5}
&Energy&$L^2$&Energy&$L^2$\cr
\hline
1 & 0.1360e-00 & 0.2929e-01  & 0.2580e-01 & 0.1429e-01  \\ \hline
2  & 0.7361e-01 & 0.1127e-01 & 0.5881e-02 & 0.1371e-02   \\ \hline
3  & 0.5712e-01 & 0.8685e-02 & 0.1453e-02 & 0.1712e-03   \\ \hline
6  & 0.5534e-01 & 0.8625e-02 & 0.2586e-04 & 0.6106e-05    \\ \hline
10  & 0.5509e-01 & 0.8570e-02 & 0.5213e-06 & 0.1604e-06  \\ \hline
ideal solution  & 0.5509e-01 & 0.8569e-02 & 0.5984e-13 & 0.2713e-13 \\ \hline
\end{tabular}
\end{center}
\end{table}
It is observed that the larger the parameter $L$, the smaller the relative errors in the energy and $L^{2}$ norms on $\Omega$, and tends the errors of the ideal solution, respectively. This observation verifies the estimate in Theorem~\ref{theo}. We also notice that the errors of the FE-LOD solution on $\Om_1$ decrease very quickly {as $L$ increases.} Especially, in the ideal case, the errors on $\Om_1$ are almost equal to zero, which is coincided with the result stated in Proposition~\ref{prop1}.

 Secondly, we study how to choose the size ($L$) of the element patches to achieve the satisfied approximation behaviour for different coarse-fine grid elements. Recall that in Theorem~\ref{theo}, to balance
the error between the terms on the right-hand side of \eqref{err1:FELODM} , it is required that the localization parameter $L$ satisfies  $L\ge L_0|\log (Hh)^\frac12|$ for some positive constant $L_0$. Hence, in the following experiments, we choose $L  = \lceil L_0|\log (Hh)^\frac12| \rceil$ for different constants $L_0$. {We adopt uniform coarse meshes with sizes} $H=2^{-i}$, $i=2,3,4,5$,
and choose the fine scale reference mesh with size $h=2^{-9}$, which can resolve the multiscale feature of $A$.
The first test is done for the periodic {diffusion coefficient} defined in \eqref{coef1} with $\epsilon=1/20$, {which is denoted by $A_1$ for convenience.}

\begin{figure}[htp]
 \begin{minipage}[t]{0.5\linewidth}
  \centerline{\includegraphics[scale=0.45]{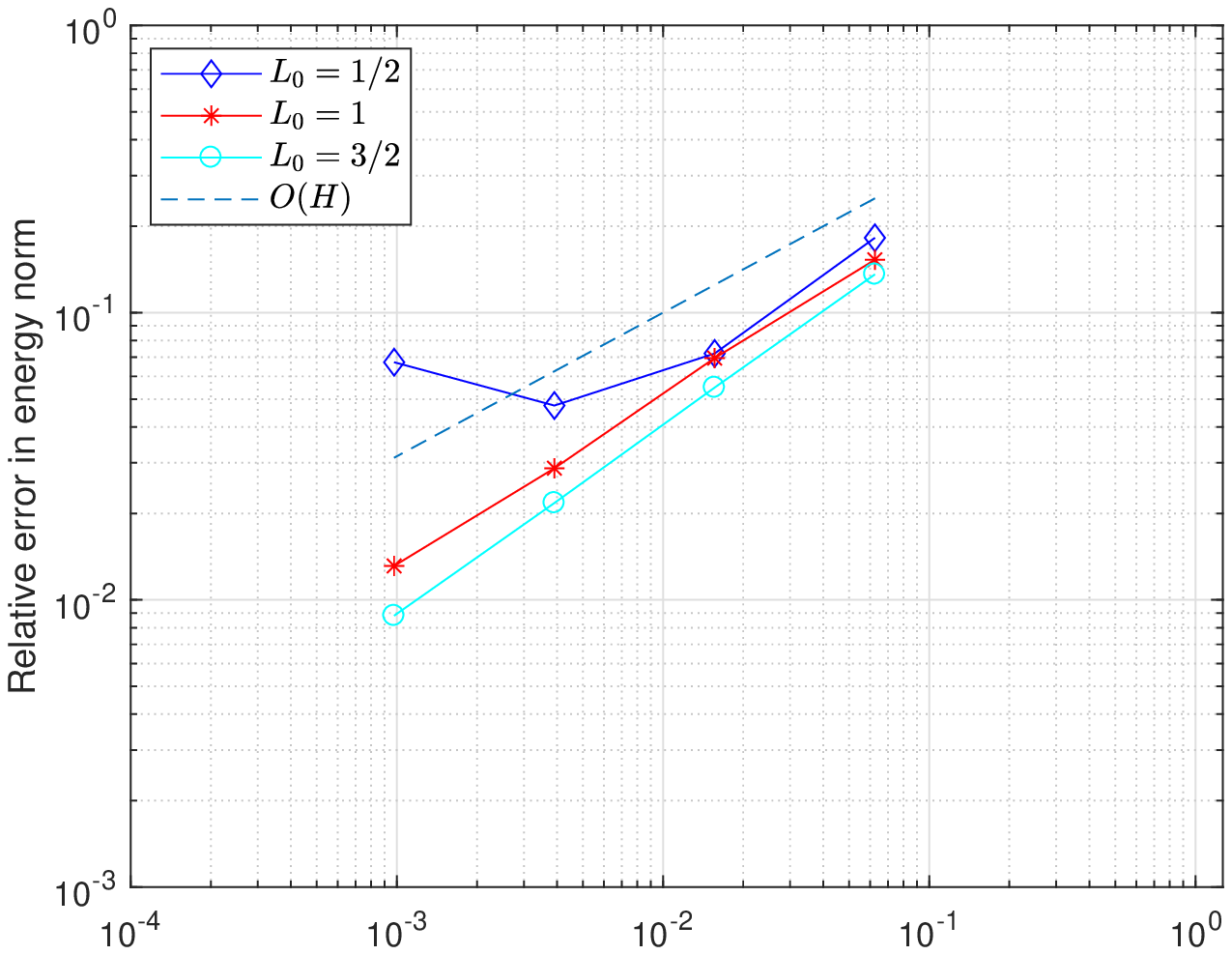}}
  \end{minipage}
%  \fill
  \begin{minipage}[t]{0.5\linewidth}
  \centerline{\includegraphics[scale=0.45]{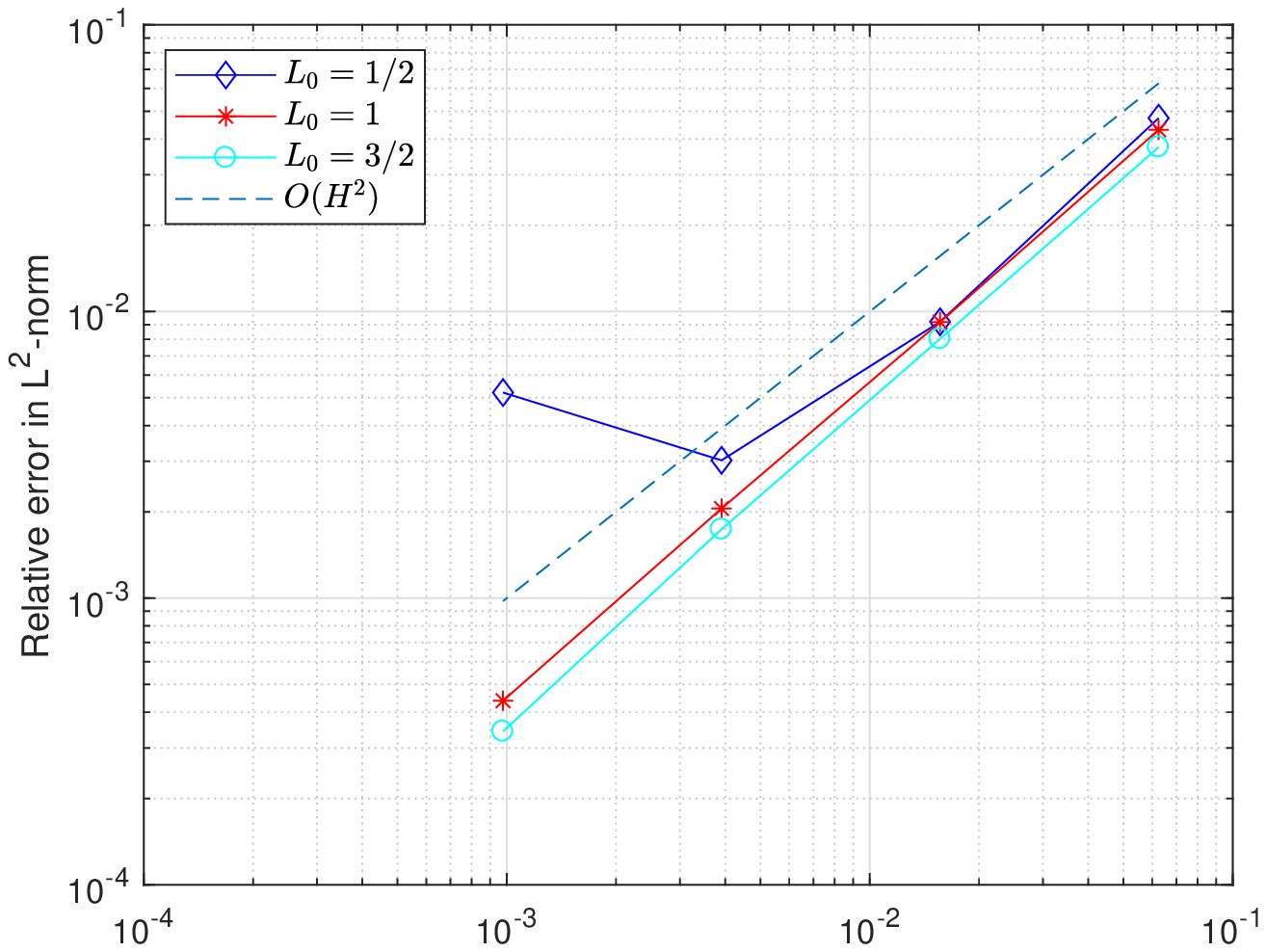}}
  \end{minipage}
\caption{{Example~\ref{ex1} with diffusion} coefficient $A_1(x)$: Relative {errors in energy-norm (left) and $L^2$-norm (right)} against the size of coarse mesh  {for $L_0=1/2,1,$ and $3/2$, respectively.} }\label{fighh}
\end{figure}

Figure~\ref{fighh} shows {the log-log plots} of the relative errors in energy-norm {(left)} and $L^2$-norm {(right)} against the size of coarse mesh ($H$) with different constants  $L_0= 1/2, 1, 3/2$,  respectively.
It is observed that {the method with $L_0=1/2$ does not performs well for large mesh size $H$, while} when $L_0$ is taken as $1$ or $3/2$, the error between the terms on the right-hand side of estimate in Theorem~\ref{theo} seems to be balanced, and the convergence rate of the energy-norm error maintains as well as that of the $L^2$-norm error.

Note that for larger localization parameter $L$,  it costs more computational effort to compute the corrector functions and cause reduced sparseness in the coarse scale stiffness matrix. Therefore in the remaining numerical experiments we use $L_0 = 1$.
In order to further illustrate the reasonability of choosing $L_0=1$, we
show the relative error results in Figure~\ref{convergfig} for {three} different diffusion coefficients $A$:
$A_1$ is defined as above; $A_2$ {is taken as the background medium in Figure~\ref{fig3}, which is a piecewise constant function on a Cartesian grid of size $2^{-9}$ and is periodic in both the
$x$- and $y$-directions};
$A_3$ is a randomly generated diffusion coefficient using the moving ellipse average technique in \cite{aver} {with} the parameters described in  Example~\ref{ex2} below.
It can be seen that {taking $L\ge |\log (Hh)^\frac12|$ (i.e. $L_0=1$) in the FE-LODM can give the optimal convergence rates in both energy- and $L^2$- norms  for all cases, which are the same as those of the ideal combined multiscale method (see Theorems~\ref{thm:ideal1}--\ref{theol2}).}

\begin{figure}[htp]
  \begin{minipage}[t]{0.5\linewidth}
  \centerline{\includegraphics[scale=0.45]{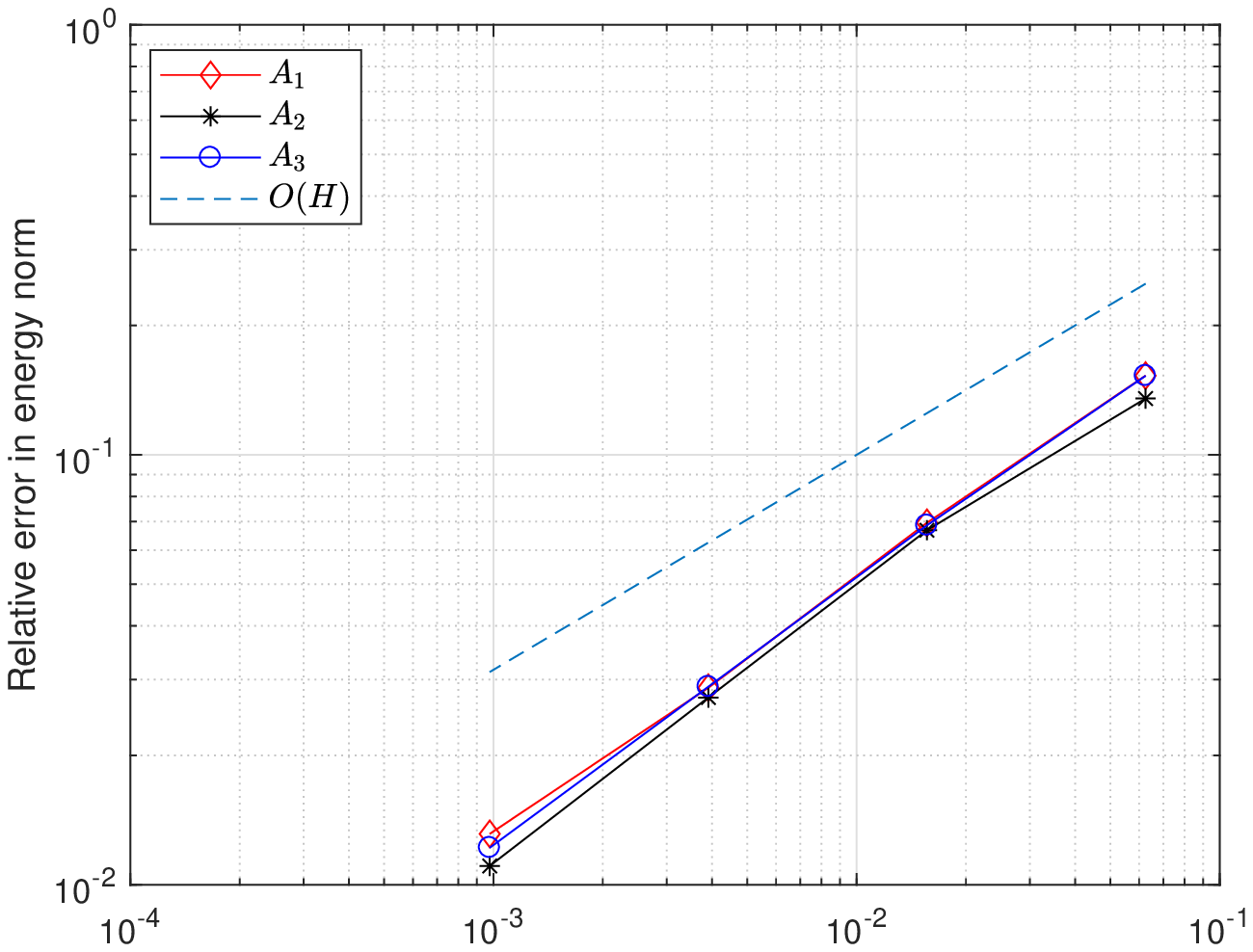}}
  \end{minipage}
%  \fill
  \begin{minipage}[t]{0.5\linewidth}
  \centerline{\includegraphics[scale=0.45]{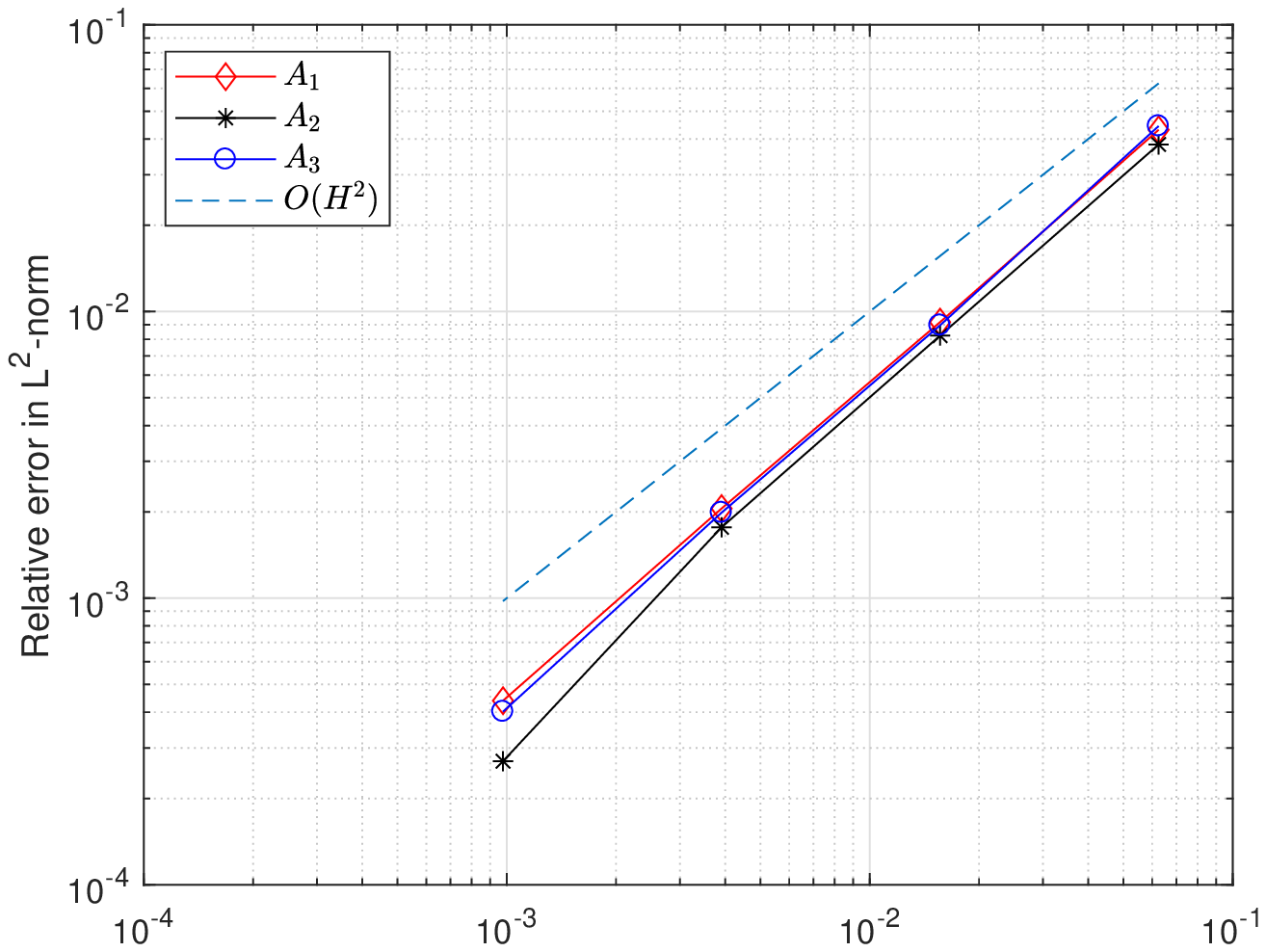}}
  \end{minipage}
\caption{{Example~\ref{ex1}}: Relative errors {in energy-norm (left) and $L^2$-norm (right)} against the size of coarse mesh for {$L_0=1$ and  $A=A_i$, $i=1,2,3,$ respectively.}}\label{convergfig}
\end{figure}

\subsection{Application to the multiscale elliptic problem with corner singularity}

In this subsection, we consider the following L-shaped domain problem
\begin{example}\label{ex2} {The multiscale problem \eqref{model} on the L-shaped domain $\Omega=\big((0,1)\times(0,1)\big)\backslash \big((\frac12,1)\times(0,\frac12)\big)$} with the random log-normal permeability field $A$, which is generated by using the moving ellipse average technique \cite{aver} with the variance of the logarithm of the permeability $\sigma^{2} = 1.5$, and the correlation lengths $l_{1} = l_{2} =0.01$ (isotropic heterogeneity) in $x_{1}$ and $x_{2}$ directions, respectively. One realization of the resulting permeability field in our numerical experiments is depicted in Figure \ref{fig2}.
\begin{figure}
\centering
\includegraphics[scale=0.56]{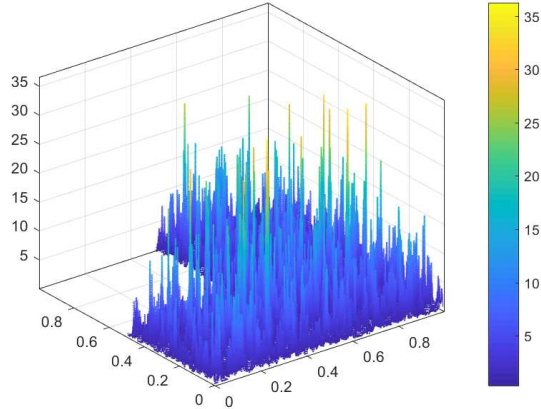}
\caption{{Example~\ref{ex2}:} The random log-normal permeability field $A$. $\frac{A_{max}}{A_{min}}$=2.9642e+03. }\label{fig2}.
\end{figure}
\end{example}
In this example, {we} set the refined subdomain $\Omega_1=\big((\frac38,\frac58)\times(\frac38,\frac58)\big)\backslash \big((\frac12,\frac58)\times(\frac38,\frac12)\big)$ to capture the singularity at the reentrant corner,  fix $H=2^{-5}$, $h=2^{-10}$, and choose the parameter $L=\lceil\log\sqrt{|Hh|}\rceil = 3$. We compare the relative errors of {the FE-LOD solution in} the $L^{2}$, $L^{\infty}$, and energy norms {with those of the LOD solution and FE-OMsPG \cite{DW2016} solution} in the whole domain as well {as in} the refined region $\Om_1$.
The errors are listed in Table \ref{tes4}. We observe that the introduced FE-LODM gives a better approximation than the LOD and FE-OMsPG methods. In particular, in the refined region $\Om_1$, our method gives much better results than the LOD method, which is very useful if one needs high-accuracy solution {in} the problematic area.
\begin{table}[htp]
\caption{{Example~\ref{ex2}:} Relative {errors} for the model problem on the L-shaped domain. $h=2^{-10}$, $H=2^{-5}$, $\gamma_{0}$=10.}\label{tes4}
\begin{center}
\begin{tabular}{|c|c|c|c|} \hline
 {{\diagbox{\qquad Method}{Error}}} & { Energy norm}
& {$L^{2}$} & {$L^{\infty}$} \\ \hline
{ LODM }       & 0.2834e-01 & 0.1553e-02 & 0.6536e-02 \\ \hline                                                           { FE-LODM} & 0.2628e-01 & 0.1449e-02 & 0.6526e-02 \\ \hline
{ FE-OMsPGM}   & 0.1045e-00 & 0.7596e-02 & 0.2424e-01 \\ \hline
{ LODM  (error in $\Omega_{1}$) }     & 0.1507e-02 & 0.1695e-02 & 0.5583e-02 \\ \hline
{ FE-LODM  (error in $\Omega_{1}$ ) }   & 0.4257e-03 & 0.4043e-03 & 0.5018e-03 \\ \hline
\end{tabular}
\end{center}
\end{table}

\subsection{Application to {the multiscale problem} with high-contrast channels}
In this subsection we use the FE-LOD method to solve the elliptic multiscale problem with high--contrast channels.
\begin{example}\label{ex3} The oscillating coefficient is set as that of \cite{DW2016}. Namely, {as shown in Figure~\ref{fig3}}, we set the high-permeability channels and inclusions with permeability values equal to $10^{5}$ and $8\times10^{4}$ respectively, and set the other values as $1$.
\begin{figure}[htp]
\centering
\includegraphics[scale=0.6]{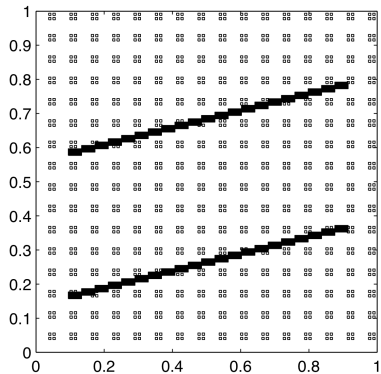}
\caption{{Example~\ref{ex3}:} Permeability field $A=10^5$ in two channels consisting of dark small rectangles; $A=8\times10^4$ in small square inclusions; $A=1$ otherwise.}\label{fig3}
\end{figure}
\end{example}
We set $\Om_1$ be the union of two layers of coarse-grid elements which contain the channels, fix $h=2^{-10}$, $H=2^{-5}$, and choose the parameter $L=\lceil\log\sqrt{|Hh|}\rceil= 3$ for this example.
The results are listed in Table \ref{table4}. It is observed that the FE-LOD method performs much better than the other two methods.
\begin{table}[htp]
\caption{{Example~\ref{ex3}:} Relative errors for the model problem with {the coefficient} given by Figure \ref{fig3}\,.\,$h=2^{-10}$\,,\,\,$H=2^{-5}$\,,\,\,$\gamma_{0}=10$.}\label{table4}
\begin{center}
\begin{tabular}{|c|c|c|c|} \hline
{ Relative error} & { Energy norm}
& $L^{2}$ & {$L^{\infty}$} \\ \hline
{ LODM } & 0.4938e-01 & 0.4882e-02 & 0.1889e-01  \\ \hline
{ FE-LODM } & 0.2169e-01 & 0.7238e-03 & 0.1248e-02\\ \hline
{ FE-OMsPGM} & 0.1063e-00 & 0.5564e-02 & 0.2580e-00 \\ \hline
\end{tabular}
\end{center}
\end{table}

\subsection{Application to {the multiscale problem with Dirac}  singularities}
In this subsection, we consider the multiscale problem with singular source terms inside the domain, which originates from the simulation of steady flow transporting through highly heterogeneous porous media driven by extraction wells. This kind of well-singularity problem is of great importance in hydrology, petroleum reservoir engineering, and soil venting techniques.

 Denote by $d({P},r)$ the disk centered at {point $P$} with radius $r >0$. {We let $\Omega$=$(0,1) \times (0,1)$ and} consider two wells $d_{j}=d({P}_{j},r), j=1,2$ with $P_{1}(\frac{1}{4},\frac{3}{4})$, $P_{2}(\frac{3}{4},\frac{1}{4})$,
 and $r=10^{-5}$. Since the size of the well (radius $r$) is negligible in situations, we make an approximation to the original single phase pressure equation by {the multiscale problem \eqref{model}  with source term $f=\sum\limits_{j=1}^{2}q_{j}\delta_{P_{j}}$ (c.f. \cite{CY2002}), where $q_{j}$ is the well flow rate  and $\delta_{P_{j}}$ is the Dirac measure at $P_{j}$. } On the well boundary $\partial d_{j}$, two quantities are of particular importance in practical applications: the well bore pressure (WBP) and the well flow rate. Here we fix the well flow rate $q_{j}$ and try to find the well bore pressure. In the computations we always take $q_{1}=-1$ and $q_{2}=1$, which corresponds to the situation that the well $d_{1}$ is an extraction well and $d_{2}$ is an injection well.

 In the following two examples, we set $\Om_1$ be the union of two small squares with edge size of $\frac{1}{16}$ centered at  points $P_{i}, i=1,2$. {And} similarly, we choose the localization parameter $L=\lceil\log\sqrt{|Hh|}\rceil=3$.

\begin{example}\label{ex4} {Let the oscillating coefficient $A$ be given by}  \begin{equation}\label{coewell}
 A(x_{1},x_{2})=\frac{1}{(2+1.5\sin\frac{2\pi x_{1}}{\epsilon})(2+1.5\sin\frac{2\pi x_{2}}{\epsilon})},
 \end{equation}
where we fix $\epsilon= 1/64$.
\end{example}
 Since the exact WBPs are unknown, we use the method introduced in \cite{CY2002} to compute them based on the well-resolved solutions obtained on a uniform $2048\times 2048$ mesh. Then we can get the ``exact" WBP $\alpha_{1}=-5.3884973$ in the first well and $\alpha_{2}=5.3884973$ in the second well (see \cite[Example~7.1]{CY2002}).
 
 In addition, we implement three other methods for comparison, including the LODM, the MsFEM introduced in \cite[Algorithm 7.1]{CY2002} (referred as G-MsFEM) and the FE-OMsPGM introduced in \cite{DW2016}. The G-MsFEM  needs to compute the discrete Green functions in a very fine mesh and it uses the developed new Peaceman method to compute the WBPs (see \cite[Section 6]{CY2002}). We also use the new Peaceman method to calculate  the WBP on each well for the LOD and FE-LOD method. For FE-OMsPGM, we use the Peaceman model  \cite{Peaceman83} to compute the WBPs since the bilinear form of FE-OMsPG method is nonsymmetric. The results are listed in Table~\ref{table5}. We can see that our FE-LODM provides a better approximation of the WBP than G-MsFEM and FE-OMsPGM, and a much better approximation than the LODM in this example.
 \begin{table}[htp]
\caption{{Example~\ref{ex4}: WBPs and relative errors at two wells.}\,$h=2^{-11}$\,,\,\,$H=2^{-6}$\,,\,\,$\gamma_{0}$=10.}\label{table5}
\begin{center}
\begin{tabular}{|c|c|c|c|c|c|} \hline
{ Methods} & { Well no.}
& { WBP} & { Relative error}\\ \hline
{ G-MsFEM} & 1 & -5.3838442 & 0.8635e-03  \\ \hline
{ FE-OMsPGM} & 1 & -5.3843102 & 0.7770e-03  \\ \hline
{ LODM } & 1 & -5.6792672 & 0.5396e-01  \\ \hline
{ FE-LODM } & 1 & -5.3876085 & 0.1649e-03 \\ \hline
{ G-MsFEM} & 2 & 5.3739254 & 0.2704e-02  \\ \hline
{ FE-OMsPGM} & 2 & 5.3843102 & 0.7770e-03  \\ \hline
{ LODM } & 2 & 5.6792672 & 0.5396e-01  \\ \hline
{ FE-LODM } & 2 & 5.3876085 & 0.1649e-03 \\ \hline
\end{tabular}
\end{center}
\end{table}

\begin{example}\label{ex5}
We generate the random permeability field $A$ on a uniform $1024\times 1024$ mesh by using the technique in \cite{aver}. Figure \ref{figeps} shows a realization of the random permeability field.

 \begin{figure}[htp]
\centering
\includegraphics[scale=0.6]{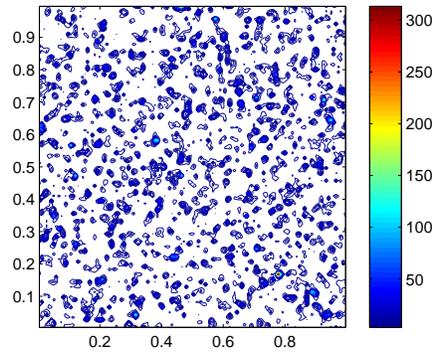}
\caption{{Example~\ref{ex5}:} The random permeability field $A$, the ratio of $\frac{A_{max}}{A_{min}}$=$6.06629e+003$.}\label{figeps}
\end{figure}
\end{example}

Using the same method as above, we can get the ``exact" well bore pressures $\alpha_{1}=-0.9860407$  and $\alpha_{2}=4.6507306$ by using the fine-grid solution on the $1024\times 1024$ mesh. The results are presented in Table \ref{table6}. We observe that {the proposed FE-LODM gives} much better approximation than the other three methods, which may be due to the fact that the FE-LODM has more accurate solution near the well than others. This superiority can also be found in the results of FE--OMsPGM, in which the local fine mesh approximation is used in the well-singularity region that is same as that of FE-LODM.

\begin{table}[htp]
\caption{{Example~\ref{ex5}: WBPs and relative errors at two wells.} $h=2^{-10}$\,,\,\,$H=2^{-6}$\,,\,\,$\gamma_{0}$=10.}\label{table6}
\begin{center}
\begin{tabular}{|c|c|c|c|c|c|} \hline
{ Methods} & { Well no.}
& { WBP} & { Relative error}\\ \hline
{ G-MsFEM} & 1 & -0.9478413 & 0.3874e-01  \\ \hline
{ FE-OMsPGM} & 1 & -0.9701813 & 0.1608e-01  \\ \hline
{ LODM } & 1 & -0.7536890 & 0.2356e-00  \\ \hline
{ FE-LODM } & 1 & -0.9864050 & 0.3695e-03 \\ \hline
{ G-MsFEM} & 2 & 1.1477417 & 0.7532e-00  \\ \hline
{ FE-OMsPGM} & 2 & 4.6391148 & 0.2498e-02  \\ \hline
{ LODM } & 2 & 2.8657275 & 0.3838e-00  \\ \hline
{ FE-LODM } & 2 & 4.6591764 & 0.1816e-02 \\ \hline
\end{tabular}
\end{center}
\end{table}

\section{Conclusions}
In this paper, we have proposed a new combined multiscale method to solve the multiscale elliptic problems which may have singularities. In order to get a good approximation of the solution in the problematic region, we use the traditional FEM directly on a very fine mesh of this subdomain, while in other region where we have a highly oscillating coefficients, we use the multiscale LODM.
The key point of implementing this idea is how to define the corrected basis function in the near interface elements. To this end, we introduce a special definition of the cell problems for the elements near the interface. The error analysis is carried out for highly varying coefficients, without any assumption on scale separation or periodicity. {Our theoretical and numerical results show} that the proposed method {is very attractive for} multiscale problems with singularities.

\appendix
%\section{Proof of Lemmas \ref{inverse}, \ref{lem8} and  \ref{hh12}}
\section{Proof of Lemma~\ref{inverse}}\label{a:inverse}
Given  $v_H=\sum\limits_{j\in \mathring{\mathcal{N}}_{H}}v_{j}\Phi_{j}\in V_{H,\Omega_2},$ let $w_H:=\C_Hv_H$. From \eqref{defclement} and \eqref{ChH},  it follows that
\eqn{
w_H=\sum\limits_{i\in \mathring{\mathcal{N}}_{H}}w_{i}\Phi_{i}\quad\text{with}\, w_{i}=\sum\limits_{j\in \mathring{\mathcal{N}}_{H}}\frac{(\Phi_{i},\Phi_{j})_{\Omega_{2}}}{(1,\Phi_{i})_{\Omega_{2}}}v_{j}.
}
Denote by $D=\text{diag}\big((1,\Phi_{1})_{\Omega_{2}},(1,\Phi_{2})_{\Omega_{2}},\cdots,
(1,\Phi_{N})_{\Omega_{2}}\big)$, $M=\big((\Phi_{i},\Phi_{j})_{\Omega_{2}}\big)_{N\times N}$,
 $V=(v_{i})_{N\times 1}, W=(w_{i})_{N\times 1},$
where $N$ represents the numbers of vertices in $\mathring{\mathcal{N}}_{H}$. Thus we have  $D^{-1}MV=W$, which yields $$V^{T}MV=V^{T}DW.$$
Hence, by use of the above equation, it is follows that
\eqn{
\|v_H\|^{2}_{L^{2}(\Omega)}&=V^{T}M V  \leq |V||D W|
                          \lesssim H^{-\frac{d}{2}}\|v_H\|_{L^{2}(\Omega)}H^{d}|W|\\
                              &\lesssim \|v_H\|_{L^{2}(\Omega)}\|w_H\|_{L^{2}(\Omega)},}
which yields
\eqn{
\|v_H\|_{L^{2}(\Omega)} &\lesssim \|w_H\|_{L^{2}(\Omega)}.}
Therefore $\C_H$ is an isomorphism on $V_{H,\Omega_2}$ and it holds that
\eq{\label{yinli}
\|(\C_H|_{V_{H,\Omega_2}})^{-1} w_H\|_{0,\Omega_2} &\lesssim \|w_H\|_{0,\Omega_2}.}
Thus, it follows from \eqref{kk77} that $\mathcal{C}_{h,H}$ is an isomorphism on $V_{h,H}$. For the sake of simplicity, we denote $(\C_{h,H}|_{V_{h,H}})^{-1}$ by  $\C_{h,H}^{-1}$ in the following.
From \eqref{kk77}, it is clear that $\C_{h,H}v_{h,H}-v_{h,H} \in V_{0,H}$.
Using the inverse inequality and Lemma~\ref{lem2}, we have
\eqn{
\|v_{h,H}-\C_{h,H}^{-1}v_{h,H}\|_{h,H}^{2}={}&\|\C_{h,H}^{-1}(\C_{h,H}v_{h,H}-v_{h,H})\|^{2}_{h,H}\\
={}&\|A^\frac12\nabla \big((\C_H|_{V_{H,\Omega_2}})^{-1}(\C_{h,H}v_{h,H}-v_{h,H})\big)\|^{2}_{0,\Omega_{2}}\notag\\
&+\frac{\gamma_{0}}{H}\|(\C_H|_{V_{H,\Omega_2}})^{-1}(\C_{h,H}v_{h,H}-v_{h,H})\|^{2}_{\Gamma}\\
\lesssim {}& H^{-2}\|(\C_H|_{V_{H,\Omega_2}})^{-1}(\C_{h,H}v_{h,H}-v_{h,H})\|^{2}_{0,\Omega_2.}}
Further, from \eqref{yinli} and \eqref{k2}, we have
\eqn{
\|v_{h,H}-\C_{h,H}^{-1}v_{h,H}\|_{h,H}^{2}&\lesssim H^{-2}\|\C_{h,H}v_{h,H}-v_{h,H}\|^{2}_{0,\Omega}\\
&=H^{-2}\sum\limits_{T\in {\M}_{H,\Omega_{2}}}\|\C_{h,H}v_{h,H}-v_{h,H}\|^{2}_{0,T}\\
&\lesssim \|v_{h,H}\|^{2}_{h,H}.}
Finally, using triangle inequality, we conclude that
\eqn{
\|\C_{h,H}^{-1}v_{h,H}\|_{h,H}&\leq \|\C_{h,H}^{-1}v_{h,H}-v_{h,H}\|_{h,H}+\|v_{h,H}\|_{h,H}\\
&\lesssim \|v_{h,H}\|_{h,H}.}
This completes the proof of the lemma.
\qed

\section{Proof of Lemma~\ref{lem8}}\label{a:lem8}

We first prove the inequality \eqref{lem81}. From the interpolation error estimates and the inverse inequality, we have
\eq{\label{Ihhew}\|\na I_{h,h}(\eta_T^{s,n}w)\|_{0,\Om_2}&\le \|\na I_{h,h}(\eta_T^{s,n}w)-\na (\eta_T^{s,n}w)\|_{0,\Om_2}+\|\na (\eta_T^{s,n}w)\|_{0,\Om_2}\notag\\
&\lesssim \bigg(\sum_{T\in\M_{h,\Om_2}}h_T^2|\eta_T^{s,n}w|_{2,T}^2\bigg)^\frac12+\|\na (\eta_T^{s,n}w)\|_{0,\Om_2}\notag\\
&\ls \|\na {(\eta_T^{s,n}w)}\|_{0,\Om_2}.}
 Using the triangle inequality, we obtain
\begin{align*}
\|I_{h,h}(\eta_T^{s,n}w)\|^{2}_{h,h}={}&\|A^{\frac{1}{2}}\nabla I_{h,h}(\eta_T^{s,n}w) \|^{2}_{0,\Omega_{2}}+\sum\limits_{e\in
\Gamma_{h}}\frac{\gamma_{0}}{h}\|I_{h,h}(\eta_T^{s,n}w)\|^{2}_{e}\\
\lesssim {}&\|A^{\frac{1}{2}}\nabla (\eta_T^{s,n}w) \|^{2}_{0,\Omega_{2}}+\sum\limits_{e\in \Gamma_{h}}\frac{\gamma_{0}}{h}\|\eta_T^{s,n}w-I_{h,h}(\eta_T^{s,n}w)\|^{2}_{e}\\
&+\sum\limits_{e\in \Gamma_{h}}\frac{\gamma_{0}}{h}\|\eta_T^{s,n}w\|^{2}_{e}:=\mathrm{R}_1+\mathrm{R}_2+\mathrm{R}_3.
\end{align*}
Using the fact that $\mathcal{C}_{h,H}w$=0, from \eqref{k2} and \eqref{66}, we have
\begin{align*}
\mathrm{R}_1 \lesssim &
\sum\limits_{T\in T_{n}\backslash T_{s}}\|{(w-\mathcal{C}_{h,H}w)}\nabla\eta_T^{s,n}\|^{2}_{0,T}
+\|A^{\frac{1}{2}}\nabla w\|^{2}_{0,T_{n}}\\
\lesssim &
\|H\nabla\eta_T^{s,n}\|^{2}_{L^{\infty}(\Omega)}\|\nabla w\|^{2}_{0,T_{n+1}\backslash T_{s-1}}
+\|A^{\frac{1}{2}}\nabla w\|^{2}_{0,T_{n}}\\
\lesssim & \|A^{\frac{1}{2}}\nabla w\|^{2}_{0,T_{n+1}}.
\end{align*}
Further, from Lemma~\ref{lem2}, it follows that
 \begin{align*}
\mathrm{R}_2\lesssim &
\sum_{T\in \mathcal{M}_{h,\Omega_{2}}}\frac{\gamma_{0}}{h}
\Big(h^{-1}\|\eta_T^{s,n}w-I_{h,h}(\eta_T^{s,n}w)\|^{2}_{0,T}\\
&\quad+\|\eta_T^{s,n}w-I_{h,h}(\eta_T^{s,n}w)\|_{0,T}\|\nabla (\eta_T^{s,n}w-I_{h,h}(\eta_T^{s,n}w))\|_{0,T}\Big)\\
\lesssim &\|A^{\frac{1}{2}}\nabla(\eta_T^{s,n}w) \|^{2}_{0,\Omega_{2}}=\mathrm{R}_1.
\end{align*}
For $\mathrm{R}_3$, it is easy to see
$$
\mathrm{R}_3\lesssim \sum_{\substack{e\in \Gamma_{h}, e\subset T_{n}}}\frac{\gamma_{0}}{h}\|w\|^{2}_{e}.
$$
{Combining} the above estimates of $\mathrm{R}_1, \mathrm{R}_2$ and $\mathrm{R}_3$, we have
\[
\|I_{h,h}(\eta_T^{s,n}w)\|^{2}_{h,h} \lesssim \|w\|^{2}_{h,h,T_{n+1}},
\]
which yields \eqref{lem81} immediately.

Next, we give the proof of the \eqref{lem82}. {Noting} that $w \in W_{0,h}$, and $\eta_T^{s,n}|_{T_{s}}\equiv 1$, $\eta_T^{s,n}|_{\Omega\backslash T_{n}}\equiv 0$,  it is obvious that
\[
\begin{split}
\|\eta_T^{s,n}w-I_{h,h}(\eta_T^{s,n}w)\|^{2}_{h,h}=&\|A^{\frac{1}{2}}\nabla (\eta_T^{s,n}w-I_{h,h}(\eta_T^{s,n}w)) \|^{2}_{0,T_{n}\backslash T_{s}}\\
&+\sum\limits_{e\in \Gamma_{h}}\frac{\gamma_{0}}{h}\|\eta_T^{s,n}w-I_{h,h}(\eta_T^{s,n}w)\|^{2}_{e}:=\mathrm{I}_1+\mathrm{I}_2.
\end{split}
\]
Similar to the estimate of $\mathrm{R}_2$, from Lemma \ref{lem2}, it follows that
\[
\mathrm{I}_2 \lesssim  \mathrm{I}_1.
\]
Further, {by a similar argument to \eqref{Ihhew} and} using \eqref{k2} and \eqref{66}, we have
\begin{align*}
\mathrm{I}_1&\lesssim\|A^{\frac{1}{2}}\nabla(\eta_T^{s,n}w) \|^{2}_{0,T_n\backslash T_s}\\
&\lesssim \sum\limits_{T\in T_n\backslash T_s}\|(w-\mathcal{C}_{h,H}w)\nabla\eta_T^{s,n}\|^{2}_{0,T}+
\|A^{\frac{1}{2}}\nabla w\|^{2}_{0,T_n\backslash T_s}\\
&\lesssim \|H\nabla\eta_T^{s,n}\|^{2}_{L^{\infty}(\Omega)}\|\nabla w\|^{2}_{0,T_{n+1}\backslash T_{s-1}}+\|A^{\frac{1}{2}}\nabla w\|^{2}_{0,T_n\backslash T_s}\\
&\lesssim  \|A^{\frac{1}{2}}\nabla w\|^{2}_{0,T_{n+1}\backslash T_{s-1}},
\end{align*}
which follows \eqref{lem82} immediately.
The proof of \eqref{lem83} and \eqref{lem84} is similar to the above inequality.
\qed

\section{Proof of Lemma \ref{hh12}}\label{a:hh12} The proof is divided into four steps.

\textbf{Step\,\, 1.} In this step we prove the following estimate for $L\ge 5$:
\begin{equation}\label{h10}
\|q^T_h-q_h^{T,L}\|_{h,h} \lesssim \|q^T_h\|_{h,h,\Omega\backslash T_{L-5}}.
\end{equation}

From \eqref{ahti} and \eqref{corl},  $q^T_h\in W_{0,h}$ and $q_h^{T,L}\in W_{0,h}(T_L)$ satisfy
\begin{align*}
&a_{\Om}(q^T_h,w)=a_{\tilde T}(u_{h,H},w)\quad \forall\, w\in W_{0,h},\\
&a_{\Om}(q_h^{T,L},w)=a_{\tilde T}(u_{h,H},w)\quad \forall\, w\in W_{0,h}(T_L).
\end{align*}
Subtracting the above two equations yields
\begin{equation}\label{hh11}
a_{\Om}(q^T_h-q_h^{T,L},w)=0\quad \forall\, w\in W_{0,h}(T_L).
\end{equation}
Denote {by} $e:=q^T_h-q_h^{T,L}$. Using the coercivity and continuity of $a_\Om$, from \eqref{hh11}, for any $w\in W_{0,h}(T_L)$ it follows that
\begin{align*}
\|e\|^{2}_{h,h}&\lesssim a_{\Om}(e,q^T_h-q_h^{T,L})= a_{\Om}(e,q^T_h-w)\\
& \lesssim \|e\|_{h,h}\|q^T_h-w\|_{h,h},
\end{align*}
which yields
\begin{equation}
\|q^T_h-q_h^{T,L}\|_{h,h}\lesssim {\inf_{w\in W_{0,h}(T_L)}}\|q^T_h-w\|_{h,h}.\label{h7}
\end{equation}
For {the element $T$, let $\eta_T^{L-3,L-2}$ be the cut off function defined in \eqref{bb}--\eqref{66} (with $l_1=L-3, l_2=L-2$). Since $\eta_T^{L-3,L-2}\equiv 1$ on $ T_{L-3}$ and $\eta_T^{L-3,L-2}\equiv 0 $ on $\Omega\backslash T_{L-2}$, it is easy to check that:
\eqn{ \mathcal{C}_{h,H}I_{h,h}(\eta_T^{L-3,L-2}q^T_h)= \mathcal{C}_{h,H}q^T_h=0\quad\text{on } T_{L-4},}
and hence
\eq{{\mathrm{supp}}(\mathcal{C}_{h,H}I_{h,h}(\eta_T^{L-3,L-2}q^T_h))&\subseteq T_{L-1}\backslash T_{L-4},\label{sup}\\
{\mathrm{supp}}(\mathcal{C}_{h,H}^2I_{h,h}(\eta_T^{L-3,L-2}q^T_h))&\subseteq T_{L}\backslash T_{L-5}.\label{sup1}}}
Using Lemma \ref{cc}, for $\mathcal{C}_{h,H}I_{h,h}(\eta_T^{L-3,L-2}q^T_h)$, there is a $\mu\in V_{0,h}$
such that
\eqn{\mathcal{C}_{h,H}\mu=\mathcal{C}_{h,H}I_{h,h}(\eta_T^{L-3,L-2}q^T_h),
\quad &{\|\mu\|_{h,h}\ls\|\mathcal{C}_{h,H}I_{h,h}(\eta_T^{L-3,L-2}q^T_h)\|_{h,H},}\\
&\text{and} \quad {\mathrm{supp}}(\mu)\subseteq T_{L}\backslash T_{L-5}.}
Further, using  {\eqref{sup}, Lemmas~\ref{lem} and} \ref{lem8}, we have
\begin{equation}\label{h9}
\begin{split}
\|\mu\|_{h,h}&\lesssim \|\mathcal{C}_{h,H}I_{h,h}(\eta_T^{L-3,L-2}q^T_h)\|_{h,H,T_{L-1}\backslash T_{L-4}}\\
&\lesssim \|I_{h,h}(\eta_T^{L-3,L-2}q^T_h)\|_{h,h,T_{L}\backslash T_{L-5}}\\
&= \|I_{h,h}(\eta_T^{L-3,L-2}q^T_h)\|_{h,h,T_{{L-2}}\backslash T_{L-3}}+\|q^T_h\|_{h,h,T_{L-3}\backslash T_{L-5}}\\
&\lesssim \|q^T_h\|_{h,h,T_{L-1}\backslash T_{L-5}}.
\end{split}
\end{equation}

Hence taking $w=I_{h,h}(\eta_T^{L-3,L-2}q^T_h)-\mu \in W_{0,h}(T_L)$ in \eqref{h7} {and using} \eqref{h9} and Lemma \ref{lem8}, we have
\begin{align*}
\|q^T_h-q_h^{T,L}\|_{h,h} &\lesssim \|I_{h,h}(1-\eta_T^{L-3,L-2})q^T_h\|_{h,h}+\|\mu\|_{h,h}\\
 &\lesssim \|q^T_h\|_{h,h,\Omega\backslash T_{L-4}}+
\|q^T_h\|_{h,h,T_{L-1}\backslash T_{L-5}},
\end{align*}
which implies that \eqref{h10} holds.

\textbf{Step\,\, 2}.  Suppose we can prove  the following recursive inequality
\begin{equation}\label{h11}
\|q^T_h\|_{h,h,\Omega\backslash T_{M}}\le  \theta_0\|q^T_h\|_{h,h,\Omega\backslash T_{m}}\quad\forall\, m=M-5\ge 0,
\end{equation}
where $0<\theta_0<1$ is a constant independent of $M$ and $q^T_h$.

For $L=5k+j$ with integers $k\ge 1$ and $0\le j\le 4$,
setting $\theta=\theta_0^\frac15$ and using \eqref{h11}  repeatedly, we conclude that
\begin{align}
\|q^T_h\|_{h,h,\Omega\backslash T_{L-5}}&\lesssim \theta_0^{k-1}\|q^T_h\|_{h,h,\Omega\backslash T_j}\lesssim \theta^{L-j-5}\|q^T_h\|_{h,h}\nn\\
&\lesssim \theta^L\|q^T_h\|_{h,h}.\label{h122}
\end{align}
Clearly, the above estimate also holds for $5\le L\le 9$ and hence \eqref{h122} holds for $L\ge 5$.

\textbf{Step\,\, 3}. In this step we prove  \eqref{h11}. Let $\varepsilon=1-\eta_T^{m+2,M-2}$ {satisfying} $\varepsilon\equiv 1$ in $\Omega\backslash T_{M-2}$, $\varepsilon\equiv 0$ in $T_{m+2}$, and $0<\varepsilon<1$ otherwise. It is easy to see that
\begin{equation}\label{eqa11}
\begin{split}
\|q^T_h\|^{2}_{h,h,\Omega\backslash T_{M}}
& \leq \|\varepsilon q^T_h\|^{2}_{h,h}\lesssim a_{\Om}(\varepsilon q^T_h,\varepsilon q^T_h)\\
&=a_{\Om}(q^T_h,\varepsilon^{2} q^T_h)
+(A{\nabla_h} \varepsilon \cdot q^T_h,\nabla_h \varepsilon\cdot q^T_h).
\end{split}
\end{equation}
For the function $\mathcal{C}_{h,H}I_{h,h}(\varepsilon^{2}q^T_h)$, using Lemma \ref{cc}, there exists a $\gamma\in V_{0,h}$ such that $\mathcal{C}_{h,H}\gamma=\mathcal{C}_{h,H}I_{h,h}(\varepsilon^{2}q^T_h)$, and
\eqn{
{\mathrm{supp}}(\mathcal{C}_{h,H}I_{h,h}(\varepsilon^{2}q^T_h))\subseteq T_{M-1}\backslash T_{m+1},\\
{\mathrm{supp}}(\gamma)\subseteq {\mathrm{supp}}(\mathcal{C}_{h,H}^2I_{h,h}(\varepsilon^{2}q^T_h))\subseteq T_{M}\backslash T_{m}.
}
In addition, it holds that
\begin{equation}\label{fy}
\|\gamma\|_{h,h}\lesssim \|\mathcal{C}_{h,H}I_{h,h}(\varepsilon^{2}q^T_h)\|_{h,H}.
\end{equation}
Since $I_{h,h}(\varepsilon^{2}q^T_h)-\gamma \in W_{0,h}(\Om_2\backslash T_{m})$,
from \eqref{ahti}, it follows that
\[
a_{\Om}(q^T_h,I_{h,h}(\varepsilon^{2}q^T_h)-\gamma)=a_{\tilde T}(u_{h,H},I_{h,h}
(\varepsilon^{2}q^T_h)-\gamma)=0,
\]
which combines \eqref{eqa11} yields
\begin{align*}
\|q^T_h\|^{2}_{h,h,\Omega\backslash T_{M}}&\lesssim a_{\Om}(q^T_h,\varepsilon^{2} q^T_h-I_{h,h}(\varepsilon^{2} q^T_h))
+a_{\Om}(q^T_h,\gamma)\\
&\quad +(A\nabla \varepsilon \cdot q^T_h,\nabla \varepsilon\cdot q^T_h):=\mathrm{T}_1+\mathrm{T}_2+\mathrm{T}_3.
\end{align*}
Using the same argument as that of \eqref{lem82}, we obtain
\begin{align*}
\mathrm{T}_1
&\lesssim\|q^T_h\|_{h,h,T_{M-2}\backslash T_{m+2}}\|\varepsilon^{2} q^T_h-I_{h,h}(\varepsilon^{2}q^T_h)\|_{h,h,T_{M-2}\backslash T_{m+2}}\\
& \lesssim \|q^T_h\|^{2}_{h,h,T_{M-1}\backslash T_{m+1}}.
\end{align*}
Further, for $\mathrm{T}_2$, using the same argument as that of \eqref{lem81}, from \eqref{fy}, we have
\begin{align*}
\mathrm{T}_2&\lesssim\|q^T_h\|_{h,h,T_{M}\backslash T_{m}}\|\gamma\|_{h,h}\\
&\lesssim\|q^T_h\|_{h,h,T_{M}\backslash T_{m}}\|\mathcal{C}_{h,H}I_{h,h}(\varepsilon^{2}q^T_h)\|_{h,H,T_{M-1}\backslash T_{m+1}}\\
&\lesssim\|q^T_h\|_{h,h,T_{M}\backslash T_{m}}\|I_{h,h}(\varepsilon^{2}q^T_h)\|_{h,h,T_{M}\backslash T_{m}}\\
&=\|q^T_h\|_{h,h,T_{M}\backslash T_{m}}\left(\|q^T_h\|_{h,h,T_{M}\backslash T_{M-2}}+\|I_{h,h}(\varepsilon^{2}q^T_h)\|_{h,h,T_{M-2}\backslash T_{m+2}}\right)\\
&\lesssim \|q^T_h\|^{2}_{h,h,T_{M}\backslash T_{m}}.
\end{align*}
To estimate $\mathrm{T}_3$, by use of the assumption \eqref{bb}--\eqref{66} and \eqref{k2}, it follows that
\begin{align*}
\mathrm{T}_3 &=\|A^{\frac{1}{2}}\nabla\varepsilon\cdot q^T_h\|^{2}_{0,\Omega}=\sum\limits_{T\in \mathcal{M}_{H,\Omega_{2}}}\|A^{\frac{1}{2}}\nabla\varepsilon\cdot q^T_h\|^{2}_{0,T}\\
&\lesssim \sum\limits_{T\in T_{M-2}\backslash T_{m+2}}\|\nabla \varepsilon\|_{L^{\infty}(\Omega)}^{2}\|q^T_h-\mathcal{C}_{H}q^T_h\|^{2}_{0,T}\\
&\lesssim \|q^T_h\|^{2}_{h,h,T_{M-1}\backslash T_{m+1}}.
\end{align*}
Thus, by using the above estimates of $\mathrm{T}_1, \mathrm{T}_2, $ and $\mathrm{T}_3$, we have, for some positive constant $C_0$,
\begin{align*}
\|q^T_h\|^{2}_{h,h,\Omega\backslash T_{M}}&\le C_0\|q^T_h\|^{2}_{h,h,T_{M}\backslash T_{m}}=C_0 \|q^T_h\|^{2}_{h,h,\Omega\backslash T_{m}}-C_0\|q^T_h\|^{2}_{h,h,\Omega\backslash T_{M}},
\end{align*}
which implies that \eqref{h11} holds with $\theta_0:=\big(\frac{C_0}{C_0+1}\big)^\frac12.$

\textbf{Step\,\, 4}. Next, we estimate $\|q^T_h\|_{h,h}^{2}$ in \eqref{h122}.
\begin{align*}
\|q^T_h\|_{h,h}^{2}&\lesssim a_{\Om}(q^T_h,q^T_h)=a_{\tilde T}(u_{h,H},q^T_h)\lesssim \|u_{h,H}\|_{h,h,\tilde{T}}\|q^T_h\|_{h,h},
\end{align*}
where $\tilde{T}$ is defined in Section \ref{sec34},
which together with \eqref{h10} and \eqref{h122} completes the proof of the lemma.
\qed

\section*{References}
\bibliography{ref}
\end{document}